\title{Product separability in central extensions}
\author{Lawk Mineh \orcidlink{0000-0001-7144-0124}}
\address{Mathematisches Institut, Universit\"at Bonn, 53115, Germany}
\email{lawk@math.uni-bonn.de}
\begin{document}

\begin{abstract}
    We show that a central extension of locally quasiconvex subgroup separable hyperbolic group is product separable, so long as it is subgroup separable.
    We also establish that a central extension of a double coset separable group by a finitely generated group is double coset separable if and only if it is subgroup separable, and that double coset separability is stable under taking direct products with finitely generated nilpotent groups.
\end{abstract}

\maketitle

\section{Introduction}

Any group \(G\) can be equipped with the profinite topology by declaring (left) cosets of finite index subgroups to be a basis of open subsets.
A subset \(U \subseteq G\) is called \emph{separable} if it is closed with respect to the profinite topology on \(G\).
Determining separability of subgroups and subsets has become an important theme in geometric group theory.
In this paper we investigate separability properties in certain types of group extensions.

We say that \(G\) is \emph{residually finite} if the trivial subgroup of \(G\) is separable, \emph{subgroup separable} (or \emph{LERF} -- locally extended residually finite) if every finitely generated subgroup of \(G\) is separable, \emph{double coset separable} if every double coset of finitely generated subgroups of \(G\) is separable and, for a fixed natural number \(n\), \emph{\(n\)-coset separable} if the setwise product of any \(n\) finitely generated subgroups is separable.
In the limiting case, we say \(G\) is \emph{product separable} if it is \(n\)-coset separable for all natural numbers \(n\).
This property is sometimes referred to as the \emph{Ribes--Zalesskii property} in the literature.

Until recently, very few groups were known to be product separable.
Finitely generated abelian groups are trivially product separable, as they are subgroup separable and every product of subgroups is itself a subgroup, and the property passes to subgroups and finite index overgroups.
Free groups were the first non-trivial examples of groups shown to possess this property: this was proven by Ribes and Zalesskii using the theory of profinite trees \cite{RibesZal}.
With language theoretic techniques, Coulbois showed that the property is preserved under free products and some amalgams with free groups \cite{Coulbois_Free_prod,Coulbois-thesis}.
The author and Minasyan showed using hyperbolic geometry that many nonpositively curved groups are product separable, including all Kleinian groups and subgroup separable fundamental groups of graphs of free groups with cyclic edge groups \cite{MinMin}.

Product separability is a very strong residual property, and has a variety of surprising applications and equivalences.
The property was first considered in its connection with finite semigroup theory: Pin and Reutenauer showed that the Rhodes' conjecture, a statement about the computability of an important subsemigroup of a finite semigroup, reduces to product separability in free groups \cite{Pin_Reutenauer}.
Shepherd recently showed the separability of products of \emph{convex-cocompact} subgroups in specially cubulated groups, and applied this to their actions on their contact graphs \cite{Shepherd}.
It has also been a key ingredient in establishing extensions of partial automorphisms of various structures \cite{Herwig-Lascar,Abdendi-Wise}.
Moreover, for a countable group, product separability is equivalent to the finite approximability of isometric actions of the group on metric spaces \cite{Rosendal}.
We also note that while the weaker notion of double coset separability is more ubiquitous, there are classes of groups whose double coset separability is only known through proofs of their product separability.

A finitely generated group \(G\) is \emph{hyperbolic} if it admits a proper and cocompact action by isometries on a hyperbolic metric space.
Archetypal examples of hyperbolic groups include (small cancellation quotients of) free groups and fundamental groups of compact hyperbolic manifolds.

In this paper we deal with finitely generated subgroups of extensions of hyperbolic groups.
Among the class of hyperbolic groups themselves, arbitrary finitely generated subgroups may be quite wild \cite{Rips}.
On the other hand, the \emph{quasiconvex} subgroups of hyperbolic groups -- subgroups that act properly and cocompactly on a quasiconvex subspace -- are relatively well-behaved.
Indeed, they are necessarily hyperbolic themselves.
A hyperbolic group in which all finitely generated subgroups are quasiconvex is called \emph{locally quasiconvex}; examples include the fundamental groups of free groups, surface groups \cite{Pittet}, and some small cancellation groups \cite{McC-WiseII,McC-Wise}.

Pushing the methods of \cite{RibesZal} further, You extended the product separability of free groups to groups of the form \(F \times \ZZ\) where \(F\) is a free group \cite{You}.
We view this as the simplest kind of central extension of a free group which is not immediately seen to be product separable.
On the other hand, Minasyan showed that hyperbolic groups that are locally quasiconvex and subgroup separable are in fact product separable \cite{MinGFERF}.
Our main result builds on both of these statements.

\begin{theorem}
\label{thm:main_intro}
    Let \(G\) be a finitely generated central extension of a subgroup separable locally quasiconvex hyperbolic group.
    If \(G\) is subgroup separable, then it is product separable.
\end{theorem}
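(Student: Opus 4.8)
The plan is to leverage the central extension \(1 \to Z \to G \xrightarrow{\pi} Q \to 1\), where \(Q\) denotes the base hyperbolic group, together with the fact that \(Q\) is product separable by Minasyan's theorem \cite{MinGFERF}. First I would record the standing structural facts: since \(G\) is finitely generated and \(Q\) is finitely presented (being hyperbolic), the central subgroup \(Z\) is finitely generated abelian, and every subgroup of \(Z\) is normal in \(G\). Given finitely generated \(H_1,\dots,H_n \le G\), write \(P = H_1 \cdots H_n\) and \(\bar H_i = \pi(H_i) \le Q\); by local quasiconvexity each \(\bar H_i\) is quasiconvex. Fix \(g \in G \setminus P\); the goal is to produce a finite-index subgroup separating \(g\) from \(P\).

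The first reduction handles the \(Q\)-direction. Since \(Q\) is product separable, \(\pi(P) = \bar H_1 \cdots \bar H_n\) is separable in \(Q\); if \(\pi(g) \notin \pi(P)\) I would pull back a separating finite-index subgroup of \(Q\) along \(\pi\) and be done. Thus the substantive case is \(\pi(g) \in \pi(P)\), that is \(g \in PZ\), and the problem becomes one of separating \(g\) from \(P\) along the central coset. The natural object to study is the central obstruction set \(S = \{\, z \in Z : gz \in P \,\}\), which does not contain the identity precisely because \(g \notin P\). For each factorisation \(\pi(g) = \bar h_1 \cdots \bar h_n\) with \(\bar h_i \in \bar H_i\), centrality of \(Z\) shows that the lifts of this factorisation sweep out a single coset of the subgroup \(Z_\Sigma = (H_1 \cap Z)\cdots(H_n \cap Z) \le Z\); hence \(S\) is a union of cosets of \(Z_\Sigma\). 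When \(Z_\Sigma\) has finite index in \(Z\) the set \(S\) is automatically closed in the profinite topology of \(Z\) and separation is immediate, so the work lies in the general case.

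The crux, and the step I expect to be the main obstacle, is to show that \(S\) is separable in \(Z\) and that this separation is realisable inside a finite quotient of \(G\). Translating through a candidate finite quotient \(q \colon G \to F\) with \(\ker q \cap Z = Z_0\) of finite index in \(Z\) and \(\pi(\ker q)\) of finite index in \(Q\), the condition \(q(g) \notin q(P)\) unwinds to the requirement that \(0\) avoid the set of central values \(q(g p^{-1})\) in \(q(Z) \cong Z/Z_0\) taken over all \(p = h_1 \cdots h_n \in P\) whose image satisfies \(\bar h_1 \cdots \bar h_n \in \pi(g)\,\pi(\ker q)\). Compared with the genuine factorisations \(\bar h_1 \cdots \bar h_n = \pi(g)\), passing to a finite quotient of \(Q\) admits extra ``approximate'' factorisations, and these must not manufacture the central value \(0\). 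The plan is to suppress them geometrically: using quasiconvexity of the \(\bar H_i\), hyperbolicity of \(Q\), and the bounded-packing input underlying the product separability of \(Q\), I would choose the finite quotient of \(Q\) fine enough that every approximate factorisation of \(\pi(g)\) lies within bounded distance of a genuine one, so that its central value agrees with a genuine central value modulo \(Z_\Sigma\). This pins the full set of central values down to finitely many cosets of \(Z_\Sigma\), whence it is closed in \(Z\) and avoids \(0\) after passing to a suitable \(Z_0\).

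Finally I would assemble the finite quotient. Subgroup separability of \(G\) -- used here beyond mere residual finiteness -- guarantees that the finite-index subgroup \(Z_0 \le Z\) extends to a finite-index normal subgroup \(N \trianglelefteq G\) with \(N \cap Z \le Z_0\) and \(\pi(N)\) of finite index, and that the relevant lifts and factorisation constraints can be separated at the level of \(G\) rather than only of \(Q\). Combining the chosen finite quotient of \(Q\) with the finite quotient \(Z/Z_0\) of the centre yields a finite quotient \(q \colon G \to F\) in which, by the central-value analysis above, \(q(g) \notin q(P)\). Since \(g \in G \setminus P\) was arbitrary, \(P\) is separable; as the \(H_i\) and \(n\) were arbitrary, \(G\) is product separable. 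The decisive difficulty throughout is the third step: controlling the central holonomy of approximate factorisations, which is exactly where the geometry of \(Q\) and the product separability of \(Q\) must be fed into the central-extension structure.
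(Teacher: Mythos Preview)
Your overall framework---reduce to \(g\in PZ\), study the central obstruction set \(S=\{z\in Z: gz\in P\}\), and try to separate \(0\) from \(S\)---is sound and indeed shares its skeleton with the paper's argument. However, the ``crux'' step contains a genuine gap: your claim that the geometry of \(Q\) forces \(S\) to be a \emph{finite} union of cosets of \(Z_\Sigma=(H_1\cap Z)\cdots(H_n\cap Z)\) is false in general. Take \(G=F_2\times\ZZ\) with \(F_2=\langle a,b\rangle\), and let \(H_1=\langle(a,0)\rangle\), \(H_2=\langle(b,0)\rangle\), \(H_3=\langle(a,0)\rangle\), \(H_4=\langle(b,1)\rangle\). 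Then each \(H_i\cap Z\) is trivial, so \(Z_\Sigma=\{0\}\), yet for \(g=(1,0)\) one computes \(S=\ZZ\): the factorisations \((1,0)(b^{-n},0)(1,0)(b^n,n)\) realise every integer \(n\). Thus \(S\) is an infinite union of \(Z_\Sigma\)-cosets, and no ``bounded packing'' argument in \(Q\) will change this, because the underlying product representatives of \(\pi(g)=1\) in \(\langle a\rangle\langle b\rangle\langle a\rangle\langle b\rangle\) are genuinely infinite and pairwise inequivalent. Your appeal to approximate factorisations being ``close to genuine ones'' does not address this: the genuine ones themselves are already unbounded.

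The paper avoids this obstacle by a different mechanism. Rather than trying to control all factorisations simultaneously, it runs a \emph{double induction} on \(n\) and on the free rank of \(Z\). One first checks whether some consecutive double coset \(H_iH_{i+1}\cap Z\) is nontrivial (note: this is a larger subgroup than your \(Z_\Sigma\) captures); if so, one quotients by it and drops the rank of \(Z\). If all such intersections are trivial, then \(\pi\) is injective on each \(H_i\) and \(\pi(H_i)\cap\pi(H_{i+1})=\pi(H_i\cap H_{i+1})\), so equivalence of product representatives lifts between \(G\) and \(Q\). The geometric input is then packaged as a \emph{bottlenecked product representatives} property: for each fixed target in \(Q\), every product representative is equivalent to one whose \(j\)-th coordinate lies in a finite set, for some fixed \(j\). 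This does \emph{not} bound all coordinates---only one---but that is enough: passing to a subsequence one freezes the \(j\)-th factor \(k_j\) and lands in the translate \(H_1\cdots H_{j-1}k_jH_{j+1}\cdots H_n\), which is separable by the inductive hypothesis on \(n-1\) factors. The missing idea in your proposal is precisely this inductive reduction; without it, there is no way to convert the hyperbolic geometry of \(Q\) into closedness of \(S\).
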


In fact, we establish an a priori more general statement: see Theorem~\ref{thm:main}.
Some assumption on the profinite topology of the extension \(G\) above is to be expected, as in general central extensions do not preserve residual properties \cite{Deligne}.
Note that the centrality of the extensions is essential in the above: there are split extensions \(G = A \rtimes Q\), where \(A\) is finitely generated free abelian and \(Q\) is finitely generated virtually free, such that \(G\) is subgroup separable but not double coset separable \cite{Min_dblcoset}.

As a consequence of the above we obtain

\begin{corollary}
\label{cor:ext_limitgp}
    A finitely generated central extension of a hyperbolic limit group is product separable.
    In particular, the fundamental group of any Seifert-fibred 3-manifold with hyperbolic base orbifold is product separable.
\end{corollary}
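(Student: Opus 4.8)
The plan is to derive both assertions from Theorem~\ref{thm:main_intro}, whose hypotheses split into conditions on the quotient and a condition on the extension itself. Thus for each group \(G\) under consideration I must exhibit a central extension
\[
1 \to A \to G \xrightarrow{\pi} Q \to 1
\]
in which \(Q\) is a subgroup separable, locally quasiconvex hyperbolic group, and then verify separately that \(G\) is subgroup separable. I first record that for a hyperbolic limit group \(Q\) the central kernel \(A\) is automatically finitely generated: \(G\) is finitely generated and \(Q\), being a limit group, is finitely presented, so the standard fact that a finitely generated central subgroup with finitely presented quotient is finitely generated shows that \(A\) is a finitely generated abelian group.

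For the first statement let \(Q\) be a hyperbolic limit group. Two of the three hypotheses are known properties of \(Q\): limit groups are subgroup separable by Wilton's analogue of Hall's theorem, and hyperbolic limit groups are locally quasiconvex, a consequence of the structure theory of limit groups (the absence of \(\ZZ^2\) rules out the free-by-cyclic type subgroups that would destroy quasiconvexity). The substantive task is subgroup separability of \(G\). Given a finitely generated \(H \le G\) and \(g \notin H\), I would separate them according to the image in \(Q\). If \(\pi(g) \notin \pi(H)\), then subgroup separability of \(Q\), pulled back along \(\pi\), supplies a separating finite quotient. The hard case is \(\pi(g) \in \pi(H)\): here \(g\) differs from an element of \(H\) by a central element \(a \in A\) with \(a \notin H\), and one must produce a finite quotient of \(G\) that simultaneously resolves the subgroup \(\pi(H)\) in \(Q\) and escapes \(H\) along the central direction. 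The plan is to combine subgroup separability of \(Q\) -- indeed the virtual retract property of its finitely generated subgroups, available since hyperbolic limit groups are virtually special and locally quasiconvex -- with control of the profinite topology along the centre, so as to force the image of \(a\) out of the image of \(H\). Concretely, I expect to use that limit groups are good in the sense of Serre, together with finite generation of \(A\), to ensure that the profinite topology of \(G\) restricts to the full profinite topology on \(A\). Granting subgroup separability of \(G\), Theorem~\ref{thm:main_intro} yields product separability.

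For the second statement let \(M\) be a Seifert-fibred \(3\)-manifold with hyperbolic base \(2\)-orbifold \(\mathcal O\). The regular fibre generates an infinite cyclic normal subgroup of \(\pi_1(M)\) with quotient the orbifold group of \(\mathcal O\). By Selberg's lemma this orbifold group has a finite-index surface subgroup \(\pi_1(\Sigma)\) with \(\Sigma\) a hyperbolic surface, and after passing to a further subgroup of index at most two I obtain a finite-index subgroup \(G_1 \le \pi_1(M)\) fitting into a central extension \(1 \to \ZZ \to G_1 \to \pi_1(\Sigma) \to 1\). Since hyperbolic surface groups are hyperbolic limit groups, the first statement applies to \(G_1\), so \(G_1\) is product separable; as product separability passes to finite-index overgroups, \(\pi_1(M)\) is product separable as well. (Alternatively one may apply Theorem~\ref{thm:main_intro} to \(G_1\) directly, using that surface groups are locally quasiconvex and subgroup separable and that \(G_1\), being the fundamental group of a Seifert-fibred space, is subgroup separable by Scott's theorem on \(3\)-manifolds.)

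The main obstacle in both parts is the verification that the central extension is subgroup separable; the remaining ingredients are standard properties of limit and surface groups together with the appeal to Theorem~\ref{thm:main_intro}. For Seifert-fibred spaces this separability is supplied by Scott's theorem, but for a general hyperbolic limit group it is precisely the case \(\pi(g) \in \pi(H)\) above -- the interplay between separating the base image in \(Q\) and controlling the induced profinite topology in the central direction -- that carries the real difficulty.
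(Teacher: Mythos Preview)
Your overall architecture matches the paper's: verify the hypotheses of Theorem~\ref{thm:main_intro} for hyperbolic limit groups (subgroup separable, locally quasiconvex, extension subgroup separable), then reduce the Seifert-fibred case to this via a finite cover with surface base. The only substantive divergence is in how you propose to obtain subgroup separability of the extension \(G\).

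What you flag as ``the real difficulty'' is in fact dispatched in the paper by a single citation together with the paper's own Proposition~\ref{lem:ERF_by_LERF_good}: limit groups are \emph{highly residually finite} (Grunewald--Jaikin-Zapirain--Zalesskii), and Proposition~\ref{lem:ERF_by_LERF_good} says that a central extension of a subgroup separable, highly residually finite group by a finitely generated abelian group is subgroup separable. Your proposed route via Serre goodness is not wrong---goodness implies high residual finiteness (indeed the paper notes that high residual finiteness is exactly residual finiteness plus cohomological separability in degree~2)---but you leave it as an ``expectation'' and do not actually close the argument. The missing piece in your plan is precisely the general mechanism of Proposition~\ref{lem:ERF_by_LERF_good} (built on Lemma~\ref{lem:sep_subgp_of_ext}): once the quotient is highly residually finite, one separates \(H\) by first separating \(HK\) for every \(K\leqslant_f Z\) (pulling back from a finite-by-\(Q\) quotient commensurable with \(Q\)) and then using that \(H\cap Z\) is separable in \(Z\). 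No virtual-retract or virtual-specialness input is needed.

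For the Seifert part your reduction is the same as the paper's, with one minor omission: the paper first disposes of the case where the fibre group has finite image in \(\pi_1(M)\) (then \(\pi_1(M)\) is commensurable with the base orbifold group and there is nothing to prove) before passing to the double cover in which the fibre becomes central.
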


Product separability of limit groups (which are, in general, toral relatively hyperbolic) was proven by the author and Minasyan \cite{MinMin}.
In general, the above is false for all limit (or even surface) groups: the integral Heisenberg group \(\operatorname{Heis}_3\ZZ\) is a cyclic central extension of \(\ZZ^2\), but is not even triple coset separable \cite{L-W}.
This further reinforces the apparent strong link between negative curvature and separability properties.

We also establish some results concerning double coset separability in extensions.

\begin{theorem}
\label{thm:central_dblcost}
    Let \(G\) be a central extension of a double coset separable group by a finitely generated group.
    Then \(G\) is double coset separable if and only if it is subgroup separable.
\end{theorem}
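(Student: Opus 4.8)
The plan is to prove the nontrivial implication, that subgroup separability of \(G\) forces double coset separability; the converse is immediate, since a single finitely generated subgroup \(H\) is the double coset \(H\{1\}\). Write the extension as \(1\to A\to G\xrightarrow{\pi}Q\to 1\) with \(A\) central (hence abelian) and finitely generated, and \(Q\) double coset separable. Since right translation is a homeomorphism of the profinite topology, after translating and replacing \(K\) by a conjugate it suffices to show that the product \(HK\) of two finitely generated subgroups \(H,K\le G\) is separable in \(G\).

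First I would carry out a fibre analysis of \(HK\) along the central kernel. Set \(\bar H=\pi(H)\), \(\bar K=\pi(K)\) and \(C=HK\cap A\). Using centrality of \(A\) I would show that \(C\) is a subgroup: writing \(B=(H\cap A)(K\cap A)\) and lifting each \(\bar c\in\bar H\cap\bar K\) to \(\tilde h\in H\), \(\tilde k\in K\) over \(\bar c,\bar c^{-1}\), the assignment \(\bar c\mapsto\tilde h\tilde k\,B\) is a well-defined homomorphism \(\bar H\cap\bar K\to A/B\) whose full preimage in \(A\) is exactly \(C\). The same bookkeeping gives the structural statement I really want: \(HK\cdot C=HK\), and for every \(g_0\in HK\) one has \(HK\cap g_0A=g_0C\). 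Thus \(HK\) is a union of cosets of the finitely generated group \(C\le A\), meeting each fibre \(\pi^{-1}(q)\) with \(q\in\bar H\bar K\) in a single coset of \(C\).

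Now take \(g\notin HK\). If \(\pi(g)\notin\bar H\bar K\), then \(\bar H\bar K\) is separable in \(Q\), so \(\pi^{-1}(\bar H\bar K)\) is separable in \(G\), and since \(HK\subseteq\pi^{-1}(\bar H\bar K)\) this separates \(g\). Otherwise \(\pi(g)=q\in\bar H\bar K\); choosing \(g_0\in HK\) over \(q\), the element \(a=g_0^{-1}g\) lies in \(A\setminus C\). Since \(A\) is finitely generated abelian, \(C\) is separable in \(A\), so there is a finite-index \(D\le A\) with \(C\le D\) and \(a\notin D\); centrality makes \(D\) normal in \(G\). Passing to \(\bar G=G/D\), the kernel \(\bar A=A/D\) is finite and central, \(\bar G/\bar A\cong Q\), and the coset computation upgrades to the statement that \(\pi_D(HK)=H_1K_1\) meets the fibre over \(q\) in the single point \(\bar g_0=\pi_D(g_0)\), while \(\bar g=\bar g_0\bar a\) with \(\bar a\ne 1\); hence \(\bar g\notin H_1K_1\).

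It therefore remains to separate \(\bar g\) from \(H_1K_1\) in the central extension \(\bar G\) of the double coset separable group \(Q\) by the \emph{finite} group \(\bar A\), and here is where I expect the main work to lie. The key observation is that \(\bar G\) is residually finite: because \(G\) is subgroup separable the finitely generated group \(D\) is separable in \(G\), which detects every nontrivial element of \(\bar A\) in a finite quotient of \(\bar G\). Consequently \(\bar G\) has a finite-index subgroup \(\bar M_0\) with \(\bar M_0\cap\bar A=1\), so \(\pi\) restricts to an isomorphism of \(\bar M_0\) onto a finite-index subgroup \(Q_0\le Q\). Invoking commensurability invariance of double coset separability, that it passes to finite-index subgroups and to finite-index overgroups, \(Q_0\) is double coset separable, hence so is \(\bar M_0\), and hence so is \(\bar G\). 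This yields a finite-index normal \(\bar M\trianglelefteq\bar G\) with \(\bar g\notin H_1K_1\bar M\); its preimage \(M\trianglelefteq G\) is finite index and satisfies \(g\notin HKM\), which separates \(g\) from \(HK\) in a finite quotient and completes the proof. The step I expect to require the most care is the reduction to finite central kernel together with this commensurability-invariance input, since that is what transports double coset separability of the base up to \(\bar G\); the fibre computation of the second paragraph is the other technical ingredient that must be executed precisely.
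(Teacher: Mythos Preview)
Your argument is correct, and in fact more direct than the paper's. The paper proceeds by induction on the free rank of \(Z\): after reducing to \(Z\) free abelian, it splits into two cases according to whether \(H_1H_2\cap Z\) is trivial. In the nontrivial case it quotients by this intersection (using Corollary~\ref{lem:central_quotient_LERF} to keep subgroup separability) to lower the rank and invoke the inductive hypothesis via Lemma~\ref{lem:prod_sep_if_int_of_subgp_nontriv}; in the trivial case it applies Lemma~\ref{lem:dbl_coset_sep_if_int_trivial}, whose hypothesis again reduces to the rank-zero base case. Your proof bypasses this dichotomy entirely: having computed \(C=HK\cap A\) and the fibre description \(HK\cap g_0A=g_0C\), you separate a given \(a\notin C\) inside the finitely generated abelian group \(A\) by a finite-index \(D\ge C\), and then pass in one step to \(G/D\), which has finite central kernel over \(Q\). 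Both arguments land on the same endgame, namely that a residually finite central extension of \(Q\) by a finite group is commensurable with \(Q\) and hence inherits double coset separability; your route gets there without the rank induction, while the paper's modular organisation has the advantage that its auxiliary lemmas (notably Lemma~\ref{lem:dbl_coset_sep_if_int_trivial} and Corollary~\ref{lem:central_quotient_LERF}) are reused elsewhere, for instance in the proof of Theorem~\ref{thm:main}.
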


The above may be compared with the main result of \cite{RZ_one-relator}.
Again, the example of a split extension noted above shows that centrality of the extension is a necessary assumption in the theorem.
The simplest form of a central extension is a direct product with an abelian group: these are the split central extensions.
Generalising such an extension in a different manner, we may consider direct products with non-abelian groups.
The same methods as above prove:

\begin{theorem}
\label{thm:prod_with_nilp}
    Let \(G\) be a double coset separable group and \(N\) a finitely generated nilpotent group.
    Then the direct product \(G \times N\) is double coset separable.
\end{theorem}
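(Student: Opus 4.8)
The plan is to induct on the nilpotency class of \(N\), stripping off the centre at each stage and converting double coset separability into subgroup separability via Theorem~\ref{thm:central_dblcost}. Precisely, I would prove by induction on \(c\) that \(G \times N\) is double coset separable whenever \(G\) is double coset separable and \(N\) is finitely generated nilpotent of class at most \(c\); the case \(c = 0\) is the hypothesis on \(G\). For the inductive step, let \(A = \{1\} \times Z(N)\). Since a finitely generated nilpotent group is Noetherian, \(A\) is a finitely generated central subgroup of \(G \times N\), and \((G \times N)/A \cong G \times (N/Z(N))\), with \(N/Z(N)\) finitely generated nilpotent of class at most \(c-1\). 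By the inductive hypothesis the quotient is double coset separable, so \(G \times N\) is a central extension of a double coset separable group by the finitely generated group \(A\). Theorem~\ref{thm:central_dblcost} then reduces the problem to showing that \(G \times N\) is subgroup separable (note that double coset separability of \(G\) already gives subgroup separability of \(G\)).

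The tool I would use for this is the standard fact that a subgroup separable group restricts the full profinite topology to each of its finitely generated subgroups: if \(M\) is finitely generated and \(U \leq M\) has finite index, then \(U\) is finitely generated, hence separable in the ambient group, and intersecting finitely many finite-index subgroups that contain \(U\) but avoid the nontrivial cosets of \(U\) in \(M\) exhibits \(U\) as the intersection of \(M\) with a finite-index subgroup. Both factors \(G\) and \(N\) enjoy this property, as a finitely generated nilpotent group is subgroup separable. To prove subgroup separability of \(\Gamma = G \times N\), take a finitely generated \(H \leq \Gamma\) and \(x = (x_G, x_N) \notin H\), and write \(\pi_G, \pi_N\) for the projections, \(M = \pi_G(H)\), \(\bar H = \pi_N(H)\) (both finitely generated as images of \(H\)), and \(K = \{g \in G : (g,1) \in H\}\). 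If \(x_N \notin \bar H\), separability in \(N\) suffices; otherwise I multiply \(x\) by an element of \(H\) with \(N\)-coordinate \(x_N\) and reduce to separating some \((w,1)\) from \(H\), and if \(w \notin M\) separability in \(G\) finishes the job. The essential case is \(w \in M \setminus K\).

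In that case I would apply Goursat's lemma to \(H \leq M \times \bar H\), obtaining normal subgroups \(K \trianglelefteq M\) and \(\bar H_0 \trianglelefteq \bar H\) and an isomorphism \(M/K \cong \bar H/\bar H_0 =: D\). As \(D\) is a quotient of the finitely generated nilpotent group \(\bar H\), it is residually finite, so the nontrivial image of \(w\) in \(M/K \cong D\) survives in some finite quotient \(D \to \bar D\); pulling back gives a finite-index \(U_M \leq M\) containing \(K\) but not \(w\). Using the restriction property for \(G\) I would produce a finite quotient \(\alpha \colon G \to F\) with \(\alpha(w) \notin \alpha(U_M)\), and using the restriction property for \(N\) I would produce a finite quotient \(\beta \colon N \to \bar N\), with kernel \(W\), small enough that \(\pi_G\bigl(H \cap (G \times W)\bigr) \subseteq U_M\); via the Goursat isomorphism this last condition amounts to forcing \(W \cap \bar H\) into the prescribed finite-index subgroup of \(\bar H\) corresponding to \(U_M\). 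Then \(\alpha \times \beta \colon \Gamma \to F \times \bar N\) sends \(H\) into a subgroup avoiding the image of \((w,1)\): any \((g,n) \in H\) with \(\beta(n)=1\) has \(g \in U_M\), so \(\alpha(g) \in \alpha(U_M) \not\ni \alpha(w)\).

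The main obstacle is exactly this final gluing step: one must choose the finite quotients of the two factors \emph{compatibly}, so that every element of \(H\) whose \(N\)-coordinate dies already has its \(G\)-coordinate trapped in \(U_M\). Making this precise rests on three ingredients working together—the Goursat description of \(H\) inside \(M \times \bar H\), the residual finiteness of the common quotient \(D\) (which is where nilpotence, or more generally polycyclicity, of \(N\) is genuinely used), and the fact that both \(G\) and \(N\) restrict the full profinite topology to their finitely generated subgroups. Once the subgroup separability of \(G \times N\) is established in this way, Theorem~\ref{thm:central_dblcost} upgrades it to double coset separability, closing the induction.
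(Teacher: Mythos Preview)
Your argument is correct, but it takes a somewhat different route from the paper. The paper does not invoke Theorem~\ref{thm:central_dblcost} as a black box; instead it reruns the \emph{proof} of Theorem~\ref{thm:central_dblcost} with the centre \(Z(N)\) playing the role of \(Z\), inducting on the Hirsch length \(h(N)\) rather than on the nilpotency class. Concretely, one reduces to \(N\) torsionfree, takes \(H_1, H_2 \le G \times N\), and splits into the two cases \(H_1H_2 \cap Z(N)\) nontrivial (handled via Lemma~\ref{lem:prod_sep_if_int_of_subgp_nontriv} and the inductive hypothesis, since any infinite subgroup of \(Z(N)\) drops Hirsch length) and trivial (handled via Lemma~\ref{lem:dbl_coset_sep_if_int_trivial}, since for \(Z' \le_f Z(N)\) one has \(h(N/Z') = h(N) - h(Z(N)) < h(N)\) as infinite nilpotent groups have infinite centre). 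No separate subgroup-separability step is needed.

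Your approach is more modular: you use Theorem~\ref{thm:central_dblcost} directly and isolate subgroup separability of \(G \times N\) as a standalone lemma. The Goursat argument you give for this is correct, but it is more work than necessary here. The direct product \(G \times N\) is a split extension \(1 \to N \to G \times N \to G \to 1\) with \(N\) finitely generated and strongly subgroup separable (polycyclic groups are ERF) and \(G\) subgroup separable, so Proposition~\ref{lem:ERF_by_LERF_good}(b) already yields subgroup separability of \(G \times N\) in one line. Your Goursat computation is essentially an ad hoc reproof of that proposition in this special case; it is fine, and the use of residual finiteness of the Goursat quotient \(D\) is a nice way to see where nilpotence enters, but you may wish to replace it with the citation.
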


One might reasonably expect that \(G \times P\) is double coset separable for \(P\) polycyclic, though our method does not seem to go through in this case.
Indeed, centrality is still a key factor in the argument and polycyclic groups may be centreless, whereas infinite nilpotent groups have infinite centres. 
Such a statement would probably constitute the best closure property for double coset separability one can hope for as far as taking a product is concerned; Lennox and Wilson proved that the all polycyclic groups are double coset separable, while in general even finitely generated metabelian groups fail to be \cite{L-W}.

\medskip

\textbf{Acknowledgements.} The author would like to thank Ashot Minasyan for many helpful discussions and Corentin Bodart for valuable comments on an earlier version of this paper. This work received funding from the European Union (ERC, SATURN, 101076148) and the Deutsche Forschungsgemeinschaft (EXC-2047/1 -- 390685813).

\section{Preliminaries}

In this section we collect some essential preliminary material.
Let us recall the following standard definitions.

\begin{definition}[Hyperbolic metric space]
    Let \(X\) be a geodesic metric space.
    Given a constant \(\delta \geq 0 \), a geodesic triangle in \(X\) is \emph{\(\delta\)-slim} if each of its sides is contained in a \(\delta\)-neighbourhood of the other two.
    We say \(X\) is a \emph{hyperbolic metric space} if there is \(\delta \geq 0\) such that every geodesic triangle in \(X\) is \(\delta\)-slim.
\end{definition}

Note that a finitely generated group is hyperbolic if and only if any of its Cayley graphs with respect to a finite generating set are hyperbolic metric spaces.
As is usual, for a path \(p\) in a metric space we will denote by \(p_-\) and \(p_+\) the initial and terminal endpoints of \(p\).
For the sake of simplicity, we will assume that all paths are continuous in this paper.

\begin{definition}[Quasigeodesic]
    Let \(\lambda \geq 1\) and \(c \geq 0\).
    A path \(p \colon [a,b] \to X\) in geodesic metric space \(X\) is called \emph{\((\lambda,c)\)-quasigeodesic} if for any subpath \(q\) of \(p\),
    \[
        \ell(q) \leq \lambda \dist(q_-,q_+) + c,
    \]
    where \(\ell(q)\) denotes the length of the subpath \(q\).
\end{definition}

\begin{definition}[Broken line]
    Let \(X\) be a geodesic metric space.
    A \emph{broken line} in \(X\) is a path \(p\) that has a (fixed) decomposition as a sequence of geodesic subpaths \(p_1, \dots, p_n\), with \((p_i)_+ = (p_{i+1})_-\) for each \(i = 1, \dots, n-1\).
\end{definition}

For three points \(x,y,z\) in metric space \(X\), the \emph{Gromov product} of \(x\) and \(y\) based at \(z\) is a measure of the angle between \(x\) and \(y\) measured from \(z\). It is given by the formula
\[
    \langle x, y \rangle_z = \frac{1}{2} \Big( \dist(x,z) + \dist(y,z) - \dist(x,y) \Big) .
\]
The following gives a useful criterion for broken lines to be quasigeodesics with well-controlled constants in hyperbolic metric spaces (cf. \cite[Lemma 4.2]{Ashot_res_hom_fin}).

\begin{lemma}[{\cite[Lemma 4.10]{MinMin}}]
\label{lem:broken_line_qgd}
    Suppose that \(X\) is a \(\delta\)-hyperbolic metric space.
    Let \(c_0, c_1,\) and \(c_2\) be constants such that \(c_0 \geq 14\delta\), $c_1 = 12( c_0+\delta)+1$ and $c_2=10 (\delta+c_1)$.
    Suppose that \(p=p_1 \dots p_n\) is a broken line in \(X\), where \(p_i\) is a geodesic with \((p_i)_-=x_{i-1} \), \((p_i)_+=x_i \), $i=1,\dots,n$.
    If \(\dist(x_{i-1}, x_{i}) \geq c_1 \) for \(i = 1, \dots, n\), and \(\langle x_{i-1}, x_{i+1} \rangle_{x_i} \leq c_0\) for each \(i = 1, \dots, n-1\), then the path \(p\) is \((4, c_2)\)-quasigeodesic.
\end{lemma}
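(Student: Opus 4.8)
The plan is to reduce the whole statement to a uniform bound on the Gromov products \(\langle x_0, x_k\rangle_{x_{k-1}}\) taken at each break point \(x_{k-1}\) as seen from the initial endpoint \(x_0\). The starting point is the defining identity \(\dist(x_0,x_k) = \dist(x_0,x_{k-1}) + \dist(x_{k-1},x_k) - 2\langle x_0,x_k\rangle_{x_{k-1}}\). Summing over \(k = 1,\dots,n\) the left-hand side telescopes, giving \(\dist(x_0,x_n) = \ell(p) - 2\sum_{k=1}^n \langle x_0,x_k\rangle_{x_{k-1}}\), where \(\ell(p) = \sum_k \dist(x_{k-1},x_k)\) is the length of the broken line. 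So once I know \(\langle x_0,x_k\rangle_{x_{k-1}} \le C\) for a constant \(C\) of size \(c_0 + O(\delta)\), I obtain \(\dist(x_0,x_n) \ge \ell(p) - 2Cn\); since every segment has length at least \(c_1\) we have \(n \le \ell(p)/c_1\), and the generous size of \(c_1\) makes \(2C/c_1\) small enough to force \(\ell(p) \le \tfrac32 \dist(x_0,x_n)\), comfortably inside the required factor \(4\).

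The crux, which I expect to be the main obstacle, is the claim \(\langle x_0,x_k\rangle_{x_{k-1}} \le c_0 + \delta\), proved by induction on \(k\). The cases \(k=1\) (where the product is \(0\)) and \(k=2\) (where \(\langle x_0,x_2\rangle_{x_1}\) is exactly the angle hypothesis at \(x_1\), hence \(\le c_0\)) are immediate. For the inductive step I would first show that \(x_0\) and \(x_{k-2}\) lie ``on the same side'' of \(x_{k-1}\), i.e.\ that \(\langle x_0,x_{k-2}\rangle_{x_{k-1}}\) is large. This follows from the elementary identity \(\langle x_0,x_{k-2}\rangle_{x_{k-1}} + \langle x_0,x_{k-1}\rangle_{x_{k-2}} = \dist(x_{k-2},x_{k-1})\) together with the inductive hypothesis \(\langle x_0,x_{k-1}\rangle_{x_{k-2}} \le c_0 + \delta\): since \(\dist(x_{k-2},x_{k-1}) \ge c_1\), this yields \(\langle x_0,x_{k-2}\rangle_{x_{k-1}} \ge c_1 - c_0 - \delta > c_0 + \delta\).

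Now I would consider the three Gromov products based at \(x_{k-1}\) of the points \(x_0, x_{k-2}, x_k\): the hypothesis gives \(\langle x_{k-2},x_k\rangle_{x_{k-1}} \le c_0\), the previous step bounds \(\langle x_0,x_{k-2}\rangle_{x_{k-1}}\) below by a quantity exceeding \(c_0 + \delta\), and the target is an upper bound on \(\langle x_0,x_k\rangle_{x_{k-1}}\). The four-point condition for \(\delta\)-hyperbolicity states that among any three such products the two smallest differ by at most \(\delta\). If \(\langle x_0,x_k\rangle_{x_{k-1}}\) exceeded \(c_0+\delta\), then both it and \(\langle x_0,x_{k-2}\rangle_{x_{k-1}}\) would exceed the third product \(\langle x_{k-2},x_k\rangle_{x_{k-1}}\) by more than \(\delta\), so the two smallest of the three would differ by more than \(\delta\) --- a contradiction. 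Hence \(\langle x_0,x_k\rangle_{x_{k-1}} \le c_0 + \delta\), closing the induction with \(C = c_0 + \delta\); one checks directly that \(2C/c_1 < \tfrac16\), which is what the counting step above needs.

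Finally I would upgrade the estimate from \(p\) to an arbitrary subpath \(q\), as the definition of quasigeodesic demands. Such a \(q\) is itself a broken line \(a, x_i, \dots, x_{j-1}, b\) whose interior segments are among the original long ones and whose interior break points satisfy the original angle condition; the two end segments may be short, but the angle condition at \(x_i\) and \(x_{j-1}\) is inherited because \(a\) and \(b\) lie on the original geodesic segments, so that for instance \(\langle a,x_{i+1}\rangle_{x_i} \le \langle x_{i-1},x_{i+1}\rangle_{x_i} \le c_0\). The induction above never uses the length of the first or last segment (those only enter at the base case \(k=2\) and at the non-existent index \(k=m+1\)), so it applies verbatim to \(q\); the only change is in the counting step, where the two short end segments contribute \(m \le \ell(q)/c_1 + 2\) rather than \(m \le \ell(q)/c_1\). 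Running the telescoping estimate for \(q\) then gives \(\ell(q) \le \tfrac32\dist(a,b) + O(C)\), and the stated \(c_2 = 10(\delta + c_1)\) absorbs the additive term with much room to spare. In short, the single genuine difficulty is the inductive Gromov-product bound encoding non-backtracking; everything after it is bookkeeping.
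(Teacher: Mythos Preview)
The paper does not prove this lemma; it is quoted from \cite[Lemma~4.10]{MinMin} and used as a black box, so there is no in-paper argument to compare against.

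Your argument is correct and follows a standard route to such broken-line quasigeodesicity statements. Two minor remarks. First, the paper defines \(\delta\)-hyperbolicity via slim triangles, whereas your inductive step invokes the Gromov four-point inequality; these are equivalent only up to a bounded change in \(\delta\), so strictly your constant \(C\) is \(c_0 + O(\delta)\) rather than exactly \(c_0+\delta\). As you already note, the choice \(c_1 = 12(c_0+\delta)+1\) with \(c_0 \ge 14\delta\) leaves ample slack, so the conclusion is unaffected. Second, your treatment of subpaths is sound: the inequality \(\langle a, x_{i+1}\rangle_{x_i} \le \langle x_{i-1}, x_{i+1}\rangle_{x_i}\) for \(a\) on the geodesic \([x_{i-1},x_i]\) reduces to the triangle inequality \(\dist(x_{i-1},x_{i+1}) \le \dist(x_{i-1},a)+\dist(a,x_{i+1})\), and the induction genuinely never appeals to the lengths of the first or last segments, so the only change is the ``\(+2\)'' in the segment count. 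Running the telescoping estimate then gives \(\ell(q) \le \tfrac{6}{5}\dist(q_-,q_+) + \tfrac{24}{5}(c_0+\delta)\), which is well inside the stated \((4,c_2)\) bound.
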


\begin{definition}(Quasiconvex subgroup)
    A subspace \(Y\) of a geodesic metric space \(X\) is called \emph{\(\sigma\)-quasiconvex} if there is some \(\sigma \geq 0\) such that every geodesic in \(X\) with endpoints in \(Y\) lies in a \(\sigma\)-neighbourhood of \(Y\).
    
    Let \(G\) be a hyperbolic group, and \(Q \leqslant G\) a finitely generated subgroup.
    We say that \(Q\) is \emph{quasiconvex} if there is \(\sigma \geq 0\) such that, as a subset of \(G\), it is \(\sigma\)-quasiconvex in a Cayley graph \(\Gamma\) of \(G\) with respect to a finite generating set.
    Such \(\sigma\) is called a \emph{quasiconvexity constant} of \(Q\) with respect to \(\Gamma\).
\end{definition}

Note that while being quasiconvex does not depend on the choice of Cayley graph for subgroups of hyperbolic groups, the particular quasiconvexity constant does.
The following lemma tells us that quasiconvex subgroups are in a sense `orthogonal', modulo their intersections.
Given \(G\) and a Cayley graph \(\Gamma\) for \(G\) equipped with the edge-path metric \(\dist\), we write
\[
    \abs{g} = \dist(1,g)
\]
for an element \(g \in G\).

\begin{lemma}[{\cite[Lemma 2.3]{MinGFERF}}]
\label{lem:bdd_gromov_product}
    Let \(G\) be a hyperbolic group and let \(H, K \leqslant G\) be quasiconvex subgroups.
    Suppose that \(\delta \geq 0\) is a hyperbolicity constant for a Cayley graph \(\Gamma\) of \(G\) and \(\sigma \geq 0\) is a quasiconvexity constant for \(H\) and \(K\) with respect to \(\Gamma\).
    There is a constant \(c_0 = c_0(\delta,\sigma) \geq 0\) such that if \(h \in H\) is such that \(\abs{h}\) is minimal among elements of \(h(H \cap K)\), then \(\langle 1, hk \rangle_h \leq c_0\) for any \(k \in K\).
\end{lemma}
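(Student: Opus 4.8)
The plan is to bound $p := \langle 1, hk\rangle_h$ directly by a length estimate, the whole point being to exploit that $\abs{h}$ is minimal in $h(H \cap K)$. I would first record the (standard) fact that $H \cap hK = h(H \cap K)$, since an element of $H$ lies in $hK$ exactly when $h^{-1}$ times it lies in $H \cap K$; thus $h$ is the shortest element of $H \cap hK$. Now consider the geodesic triangle with vertices $1$, $h$, $hk$. By definition of the Gromov product the internal point $a$ on the side $[1,h]$ and the internal point $b$ on $[h,hk]$ both lie at distance exactly $p$ from $h$; these points exist because $p \leq \min\{\abs{h},\abs{k}\}$, and $\delta$-slimness bounds the insize of the triangle, giving $\dist(a,b) \leq 4\delta$.

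Next I would bring in quasiconvexity. Since $1,h \in H$, the side $[1,h]$ lies in the $\sigma$-neighbourhood of $H$, so there is $h_1 \in H$ with $\dist(a,h_1) \leq \sigma$; since $h, hk \in hK$ and $hK$ is $\sigma$-quasiconvex (being a translate of $K$), the side $[h,hk]$ lies in the $\sigma$-neighbourhood of $hK$, so there is $k_1 \in K$ with $\dist(b, hk_1)\leq \sigma$. Combining these gives $\dist(h_1, hk_1) \leq 2\sigma + 4\delta =: R$. Moreover, as $a$ lies on $[1,h]$ at distance $p$ from $h$ we have $\dist(1,a) = \abs{h} - p$, and hence $\abs{h_1} \leq \abs{h} - p + \sigma$. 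The upshot is that $h_1$ is an element of $H$ that is short (when $p$ is large) and lies within $R$ of the coset $hK$.

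The crux, and the step where hyperbolicity is essential, is to upgrade ``$h_1$ is close to $hK$'' into ``$h_1$ is close to the intersection $H \cap hK$''. For this I would use the key geometric fact that, for quasiconvex subgroups of a hyperbolic group with nonempty intersection, the intersection of neighbourhoods is controlled: there is a constant $R' = R'(\delta,\sigma)$ such that $N_R(H) \cap N_R(hK) \subseteq N_{R'}(H \cap hK)$, which applies here because $h \in H \cap hK$. This is the bounded-packing / geometry-of-intersections phenomenon for quasiconvex subgroups; it fails without negative curvature (two distinct lines in $\ZZ^2$ have close neighbourhoods along an arbitrarily long region yet meet only at the origin), which is precisely why a hyperbolicity hypothesis cannot be dropped. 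Applying it to $h_1$ produces $h_2 \in H \cap hK$ with $\dist(h_1, h_2) \leq R'$.

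Finally I would invoke minimality. Since $H \cap hK = h(H\cap K)$ and $h$ has minimal length in this coset, $\abs{h} \leq \abs{h_2} \leq \abs{h_1} + R' \leq \abs{h} - p + \sigma + R'$, whence $p \leq \sigma + R'$; setting $c_0 = \sigma + R'$ (a function of $\delta$ and $\sigma$ alone) finishes the argument. I expect the neighbourhood-intersection inclusion of the third paragraph to be the main obstacle: it is the only genuinely nontrivial geometric input, and proving it from scratch rather than citing it would require a separate thin-polygon argument showing that two quasiconvex subgroups fellow-travelling along a sufficiently long segment must meet near that segment.
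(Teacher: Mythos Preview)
The paper does not prove this lemma; it is quoted verbatim as \cite[Lemma~2.3]{MinGFERF} and used as a black box, so there is no in-paper proof to compare against. Your argument is correct and is essentially the standard one behind the cited result: locate the inscribed points of the triangle \((1,h,hk)\) on the two sides meeting at \(h\), use \(\sigma\)-quasiconvexity of \(H\) and of the coset \(hK\) to approximate them by an element \(h_1\in H\) and an element of \(hK\) at controlled distance, then invoke the neighbourhood-intersection inclusion \(N_R(H)\cap N_R(hK)\subseteq N_{R'}(H\cap hK)\) to produce a short element of \(h(H\cap K)=H\cap hK\), contradicting minimality of \(\abs{h}\) unless \(p\) is bounded. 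The only point worth flagging is uniformity of \(R'\): since \(H\cap hK\) is a translate of the quasiconvex subgroup \(H\cap K\), translating by \(h^{-1}\) reduces the coset statement to the subgroup statement, so \(R'\) genuinely depends only on \(\delta\) and \(\sigma\) and not on \(h\), as your conclusion requires.
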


The next two lemmas are elementary statements dealing with double cosets.

\begin{lemma}
\label{lem:double_coset_int_central_is_subgp}
    Let \(G\) be a group with central subgroup \(Z \leqslant Z(G)\).
    Let \(H, K \leqslant G\) be finitely generated subgroups.
    Then \(HK \cap Z\) is a subgroup of \(Z\). 
\end{lemma}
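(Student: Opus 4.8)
The plan is to verify directly that $HK \cap Z$ satisfies the subgroup criterion inside $Z$: it contains the identity (clearly $1 = 1\cdot 1 \in HK$ and $1 \in Z$), and is closed under products and inverses. The only subtlety is that the product set $HK$ is in general merely a subset of $G$ rather than a subgroup, so that a naive multiplication $h_1k_1h_2k_2$ leaves an uncontrolled crossing term $k_1h_2$. The whole point is that elements of $HK$ which happen to lie in the \emph{centre} $Z$ can be slid past other factors, which is exactly what lets us re-collect the $H$-letters and $K$-letters. This is where centrality is essential, and it is the only place it is used.

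For closure under products, I would take $z_1 = h_1k_1$ and $z_2 = h_2k_2$ in $HK \cap Z$, with $h_i \in H$ and $k_i \in K$. Writing $z_1z_2 = h_1k_1z_2$ and using that $z_2 \in Z$ commutes with $k_1$, I get $z_1z_2 = h_1z_2k_1 = h_1h_2k_2k_1$. Since $h_1h_2 \in H$ and $k_2k_1 \in K$, this exhibits $z_1z_2$ as an element of $HK$; as $Z$ is a subgroup we also have $z_1z_2 \in Z$, so $z_1z_2 \in HK \cap Z$.

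For inverses, I would first record the small observation that if $z = hk \in Z$ then $h$ and $k$ commute: centrality gives $zh = hz$, i.e. $hkh = h^2k$, and left-cancelling $h$ yields $kh = hk$. Consequently $z^{-1} = (hk)^{-1} = h^{-1}k^{-1} \in HK$, and again $z^{-1} \in Z$, so $z^{-1} \in HK \cap Z$. Together with the identity this shows $HK \cap Z$ is a subgroup of $Z$. I do not expect any genuine obstacle here: once one sees the commuting trick the argument is entirely formal, and in fact finite generation of $H$ and $K$ plays no role — the statement holds for arbitrary subgroups, with the centrality of $Z$ being the sole ingredient. (The finite generation hypothesis is presumably retained only for uniformity with the surrounding results.)
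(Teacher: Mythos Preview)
Your proof is correct and follows essentially the same approach as the paper: both arguments hinge on the observation that if $hk \in Z$ then $h$ and $k$ commute, which permits rearranging an $HK$-word back into $HK$-form. The only cosmetic difference is that the paper verifies the one-step subgroup criterion $z_1 z_2^{-1} \in HK$ directly, whereas you check closure under products and inverses separately; your remark that finite generation of $H$ and $K$ is unnecessary is also correct.
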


\begin{proof}
    Let \(h_1, h_2 \in H, k_1, k_2 \in K\) be such that \(h_i k_i = z_i\) is an element of \(Z\) for \(i = 1, 2\).
    Since \(Z\) is central, \(h_i = z_i k_i^{-1} = k_i^{-1} z_i\).
    It follows that \(h_i k_i = z_i = k_i h_i\), so \(h_i\) and \(k_i\) commute and their product is central in \(G\).
    In particular, \(k_2^{-1} h_2^{-1} = h_2^{-1} k_2^{-1}\) is central.
    Thus \(h_1 k_1 k_2^{-1} h_2^{-1} = h_1 h_2^{-1} k_2^{-1} k_1 \in HK\), concluding the lemma.
\end{proof}

\begin{lemma}
\label{lem:int_of_projections}
    Let \(G\) be a group, \(H, K \leqslant G\) subgroups, and \(\pi \colon G \to Q\) a homomorphism.
    Write \(Z = \ker \pi\).
    If \(HK \cap Z = \{1\}\), then \(\pi(H) \cap \pi(K) = \pi(H \cap K)\).
    Moreover, if \(H \cap Z = \{1\}\) and \(K \cap Z = \{1\}\), the converse also holds.
\end{lemma}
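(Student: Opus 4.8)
The plan is to prove both implications by direct element chasing, keeping careful track of which of the possible hypotheses is needed at each step; everything reduces to the observation that $\pi(g) = 1$ is equivalent to $g \in Z$.

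For the forward direction I assume only $HK \cap Z = \{1\}$. The inclusion $\pi(H \cap K) \subseteq \pi(H) \cap \pi(K)$ is automatic, so it suffices to establish the reverse containment. Given $q \in \pi(H) \cap \pi(K)$, I would choose $h \in H$ and $k \in K$ with $\pi(h) = q = \pi(k)$. Then $\pi(hk^{-1}) = 1$, so $hk^{-1}$ lies in $Z$; since it also lies in $HK$, the hypothesis forces $hk^{-1} = 1$, whence $h = k \in H \cap K$ and $q = \pi(h) \in \pi(H \cap K)$.

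For the converse I add the hypotheses $H \cap Z = \{1\}$ and $K \cap Z = \{1\}$ and assume $\pi(H) \cap \pi(K) = \pi(H \cap K)$. I would take an arbitrary $z \in HK \cap Z$ and write $z = hk$ with $h \in H$, $k \in K$. Applying $\pi$ and using $\pi(z) = 1$ gives $\pi(h) = \pi(k^{-1})$, so $\pi(h) \in \pi(H) \cap \pi(K) = \pi(H \cap K)$. Choosing $w \in H \cap K$ with $\pi(w) = \pi(h)$, I find $hw^{-1} \in H \cap Z = \{1\}$, so $h = w \in K$. Consequently $z = hk \in K$, and therefore $z \in K \cap Z = \{1\}$, as required.

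The argument is entirely elementary, so there is no genuine analytic obstacle; the only point requiring care is the asymmetry in the hypotheses. The forward implication uses nothing beyond $HK \cap Z = \{1\}$, whereas the converse genuinely needs the triviality of both $H \cap Z$ and $K \cap Z$: the first to convert the coincidence of images into membership of $h$ in $H \cap K$, and the second to conclude that the central element $z$ itself is trivial once $h$ has been absorbed into $K$. Keeping this bookkeeping straight is effectively the whole content of the proof.
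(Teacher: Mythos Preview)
Your proof is correct and follows essentially the same element-chasing argument as the paper's, with only cosmetic differences in notation (your $w$ is the paper's $x$). One tiny quibble: in your final commentary you call $z$ a ``central element'', but in this lemma $Z=\ker\pi$ is merely normal, not assumed central---your actual argument doesn't use centrality, so this is harmless.
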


\begin{proof}
    Suppose that \(g \in \pi(H) \cap \pi(K)\).
    There are elements \(h \in H\) and \(k \in K\) with \(\pi(h) = \pi(k) = g\).
    Thus \(h = zk\) for some \(z \in Z\).
    But then \(hk^{-1} = z \in HK \cap Z\), which is trivial by the hypothesis.
    Therefore, \(h = k \in H \cap K\) and so \(g \in \pi(H \cap K)\).
    The reverse inclusion is immediate.

    For the latter statement, suppose that \(H \cap Z = K \cap Z = \{1\}\) and let \(g \in HK \cap Z\).
    Then \(g = hk\) for some \(h \in H\) and \(k \in K\).
    As \(g \in Z\), we have \(\pi(h) = \pi(k)^{-1}\), which is an element of \(\pi(H) \cap \pi(K) = \pi(H \cap K)\).
    Therefore \(\pi(h) = \pi(x)\) for some \(x \in H \cap K\).
    Since \(H \cap Z = \{1\}\), we must have \(h = x \in H \cap K\).
    But then \(g \in K \cap Z = \{1\}\).
\end{proof}

\subsection*{Extensions and the profinite topology}

For the remainder of the section, assume that \(G\) is an extension of the group \(Q\) by the group \(Z\).
That is, \(G\) fits into the short exact sequence
\[
    1 \to Z \to G \to Q \to 1.
\]
The following will give us a criterion for separability of subgroups is extensions.

\begin{lemma}
\label{lem:sep_subgp_of_ext}
    Let \(H \leqslant G\) be a subgroup.
    If
    \begin{enumerate}
        \item[(i)] for any \(K \leqslant_f Z\), the double coset \(HK\) is separable in \(G\); and
        \item[(ii)] \(H \cap Z\) is separable in \(Z\),
    \end{enumerate}
    then \(H\) is separable in \(G\).
\end{lemma}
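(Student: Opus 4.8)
The plan is to verify separability of \(H\) one point at a time: for each \(g \in G \setminus H\) I will produce a double coset of the form \(HK\) with \(K \leqslant_f Z\) (allowing the index-one case \(K = Z\)) that contains \(H\) but not \(g\), and then invoke hypothesis~(i). Passing from separability of such a double coset to the separation of \(g\) from \(H\) is routine: if \(HK\) is separable and \(g \notin HK\), there is a homomorphism \(\phi\) onto a finite group with \(\phi(g) \notin \phi(HK) \supseteq \phi(H)\); since \(\phi(H)\) is a subgroup of a finite group, \(L \coloneqq \phi^{-1}(\phi(H))\) is a finite-index subgroup of \(G\) containing \(H\) but not \(g\). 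Thus the whole problem reduces to locating, for each given \(g\), a suitable \(K\) with \(g \notin HK\).

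Let \(\pi \colon G \to Q\) denote the quotient map, so that \(Z = \ker \pi\), and fix \(g \in G \setminus H\). I would split into two cases according to whether \(\pi(g)\) lies in \(\pi(H)\). If \(\pi(g) \notin \pi(H)\), then \(g \notin HZ = \pi^{-1}(\pi(H))\); since \(Z \leqslant_f Z\) trivially, hypothesis~(i) applied with \(K = Z\) separates \(g\) from \(HZ \supseteq H\). This case therefore uses only~(i), and it is worth flagging that the admissibility of \(K = Z\) is exactly what makes it work.

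The substantive case is \(\pi(g) \in \pi(H)\), where condition~(ii) enters. Here I can write \(g = hz\) with \(h \in H\) and \(z \in Z\). Because \(g \notin H\), the element \(z = h^{-1}g \in Z\) cannot lie in \(H \cap Z\) (otherwise \(g = hz \in H\)). Now hypothesis~(ii), together with the fact that a separable subgroup of \(Z\) is the intersection of the finite-index subgroups containing it, supplies \(K \leqslant_f Z\) with \(H \cap Z \leqslant K\) and \(z \notin K\). The step where the argument really turns is the check that \(g \notin HK\): if \(g = h'k\) with \(h' \in H\) and \(k \in K\), then \(h^{-1}h' = z k^{-1} \in Z\), so \(h^{-1}h' \in H \cap Z \leqslant K\); but then \(z = (h^{-1}h')k \in K\), contradicting the choice of \(K\). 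With \(g \notin HK\) in hand, hypothesis~(i) finishes this case via the reduction above.

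The only genuine obstacle is securing the containment \(H \cap Z \leqslant K\) when invoking separability of \(H \cap Z\) in \(Z\): this is precisely what keeps the overflow term \(h^{-1}h'\) inside \(K\) and forces the contradiction \(z \in K\). Everything else is bookkeeping, and no input beyond the two hypotheses is required.
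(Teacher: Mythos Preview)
Your proof is correct and follows essentially the same route as the paper's. The only cosmetic difference is framing: the paper argues by contradiction---assuming \(g \in HK\) for every \(K \leqslant_f Z\), deducing \(z \in (H \cap Z)K\) for all such \(K\), and then invoking (ii) to force \(z \in H \cap Z\)---whereas you use the equivalent reformulation of (ii) (a separable subgroup is the intersection of the finite-index subgroups containing it) to directly exhibit a single \(K \leqslant_f Z\) with \(H \cap Z \leqslant K\) and \(z \notin K\); the key computation \(h^{-1}h' = zk^{-1} \in H \cap Z\) is identical in both.
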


\begin{proof}
    Let \(g \notin H\) be arbitrary.
    We will show that there is some \(K \leqslant_f Z\) with \(g \notin HK\), from which it follows that \(H\) is separable by (i).
    Indeed, suppose otherwise, for a contradiction.
    Certainly, \(g \in HZ\), so \(g = hz\) for some \(h \in H\) and \(z \in Z\).
    If \(g \in HK\), then there is \(h' \in H\) and \(k \in K\) with \(g = h' k\).
    But then \(hz = h' k\), so \(h^{-1} h' = z k^{-1} \in H \cap Z\).
    It follows that \(z = h^{-1} h_n z_n \in (H \cap Z) K\).
    Therefore, if \(g \in HK\) for every \(K \leqslant_f Z\), we have \(z \in (H \cap Z)K\) for every such \(K\) also.
    As \(H \cap Z\) is separable in \(Z\) by (ii), this implies that \(z \in H \cap Z\).
    But then \(g = hz \in H\), which is our desired contradiction.
\end{proof}

We call \(Q\) \emph{highly residually finite} if \(G\) is residually finite whenever \(Z\) is finite.
This property was introduced by Corson and Ratkovich under the moniker `super residual finiteness' \cite{Corson-Ratkovich}.
Remarkably, highly residually finite groups are exactly the class of groups that are residually finite and have separable cohomology in degree 2 \cite{Lorenson}.

A group is called \emph{strongly subgroup separable} (or \emph{ERF} -- extended residually finite) if all of its subgroups are separable.
Allenby and Gregorac claimed that a split extension of a subgroup separable group by a finitely generated strongly subgroup separable group is again subgroup separable \cite{Allenby-Gregorac}.
In the same spirit we can show that certain non-split extensions are subgroup separable.
We also sharpen the result, allowing for the consideration of subgroups in a particular class.
Note that \cite{Allenby-Gregorac} does not contain a correct proof of the split extension case, so we include it here for convenience.

Let \(\mathfrak P\) be a group property, and say a group \(G\) is \emph{\(\mathfrak P\)-subgroup separable} if each of its finitely generated subgroups satisfying \(\mathfrak P\) are closed in the profinite topology on \(G\).
Similarly, call \(G\) \emph{strongly \(\mathfrak P\)-subgroup separable} if each of its subgroups satisfying \(\mathfrak P\) is separable.
Taking \(\mathfrak P\) to be the empty property recovers ordinary subgroup separability.

\begin{proposition}
\label{lem:ERF_by_LERF_good}
    Let \(\mathfrak P\) be a group property that is preserved under taking subgroups and quotients.
    Suppose \(Q\) is \(\mathfrak P\)-subgroup separable and \(Z\) is finitely generated and strongly \(\mathfrak P\)-subgroup separable.
    If either 
    \begin{enumerate}
        \item[(a)] \(Q\) is highly residually finite; or
        \item[(b)] \(G = Z \rtimes Q\) is a split extension,
    \end{enumerate}
    then \(G\) is \(\mathfrak P\)-subgroup separable.
\end{proposition}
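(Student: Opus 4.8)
The plan is to apply Lemma~\ref{lem:sep_subgp_of_ext} to an arbitrary finitely generated \(\mathfrak P\)-subgroup \(H \leqslant G\). Condition (ii) is immediate: \(H \cap Z \leqslant H\) inherits property \(\mathfrak P\), and since \(Z\) is strongly \(\mathfrak P\)-subgroup separable, \(H \cap Z\) is separable in \(Z\). All the work is in condition (i), namely that \(HK\) is separable in \(G\) for every \(K \leqslant_f Z\). I would first reduce this to the case of a finite kernel. Choose a finite-index characteristic subgroup \(N \leqslant K\) of \(Z\); being characteristic, \(N\) is normal in \(G\), and since \(N \leqslant K\) we have \(NHK = HK\), so \(HK\) is the full \(\rho\)-preimage of its image under \(\rho \colon G \to \overline G := G/N\). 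As preimages of separable sets under the continuous map \(\rho\) are separable, it suffices to separate \(\overline{H}\,\overline K\) in \(\overline G\); and because \(\overline Z := Z/N\) is finite, \(\overline H\,\overline K\) is a finite union of right translates of \(\overline H\), so it is enough to prove that \(\overline H\) is separable in \(\overline G\). This reduces the proposition to its own special case of finite kernel, which I then attack directly (so the argument is not circular).

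So assume \(\overline Z\) is finite. First note \(\overline G\) is residually finite: in case (a) this is exactly the highly-residually-finite hypothesis, and in case (b) it holds because \(\overline G = \overline Z \rtimes Q\) contains \(\overline Z \times Q_0\) as a finite-index subgroup, where \(Q_0\) is the kernel of the finite action \(Q \to \operatorname{Aut}(\overline Z)\). Given \(g \notin \overline H\), I would split according to its image under \(\pi \colon \overline G \to Q\). If \(\pi(g) \notin \pi(\overline H)\), then since \(\pi(\overline H) \cong \overline H/(\overline H \cap \overline Z)\) is a finitely generated \(\mathfrak P\)-subgroup of \(Q\) it is separable, and pulling back a finite quotient of \(Q\) detecting \(\pi(g) \notin \pi(\overline H)\) separates \(g\) from \(\overline H\). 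The remaining ``vertical'' case is \(\pi(g) \in \pi(\overline H)\), i.e. \(g \in \overline H\,\overline Z\); writing \(g = h z_0\) this reduces to separating a single element \(z_0 \in \overline Z \setminus \overline H\) from \(\overline H\).

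This vertical separation is the crux. In the split case (b) I would argue as follows. Using the section \(s \colon Q \to \overline G\), write elements of \(\overline Z \rtimes Q\) as pairs \((w,p)\). For fixed \(z_0 \in \overline Z \setminus \overline H\), the set \(P_{z_0} = \{\, p \in Q : (z_0,p) \in \overline H \,\}\) is either empty (in which case any finite quotient \(\overline Z \rtimes (Q/Q_1)\) with \(Q_1\) acting trivially on \(\overline Z\) already works) or a single coset \(p_0 R\) of the subgroup \(R = \{\, p : (1,p) \in \overline H \,\}\). The key point is that the ``\(\overline Z\)-component modulo \(\overline H \cap \overline Z\)'' defines a crossed homomorphism \(\pi(\overline H) \to \overline Z/(\overline H \cap \overline Z)\) into a finite group, so its kernel \(R\) has finite index in \(\pi(\overline H)\); hence \(R\) is a finitely generated \(\mathfrak P\)-subgroup of \(Q\) and is therefore separable. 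Since \(z_0 \notin \overline H\) gives \(1 \notin P_{z_0}\), i.e. \(p_0 \notin R\), separability produces a finite-index normal \(Q_1 \trianglelefteq_f Q\) (which we may take inside the kernel of the action on \(\overline Z\)) with \(p_0 R \cap Q_1 = \varnothing\); the induced finite quotient \(\overline G \to \overline Z \rtimes (Q/Q_1)\) is injective on \(\overline Z\) and sends \(z_0\) outside the image of \(\overline H\). Lemma~\ref{lem:int_of_projections} is the natural bookkeeping tool for comparing the images of \(\overline H\) and \(\overline Z\) here.

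The main obstacle is case (a), where \(\overline G\) need not split and no global section is available to build \(P_{z_0}\) and the finishing finite quotient. Residual finiteness alone is not enough: it yields finite quotients injective on \(\overline Z\), but the relevant subextension \(\pi^{-1}(\pi(\overline H)) \to \pi(\overline H)\), of which \(\overline H\) is a complement, need only split over a possibly non-cofinal family of finite-index subgroups of \(Q\). Closing this gap is exactly what the highly-residually-finite hypothesis — equivalently, residual finiteness together with separability of degree-\(2\) cohomology — is designed to do, furnishing a finite quotient of \(\overline G\) that is injective on \(\overline Z\) and compatible with the splitting, so that the same separation of \(z_0\) from \(\overline H\) goes through. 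Verifying this compatibility carefully, and confirming that the finitely generated \(\mathfrak P\)-subgroup \(R\) really is separable in \(Q\), is where I expect the essential difficulty to lie.
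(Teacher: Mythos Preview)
Your reduction to the finite-kernel situation and the verification of condition~(ii) are exactly as in the paper. However, you then overcomplicate matters and leave a genuine gap. The paper's argument is far shorter and treats cases~(a) and~(b) uniformly: once you have established that \(\overline G = G/N\) is residually finite (which you correctly do in both cases), the finiteness of \(\overline Z\) immediately gives a finite-index subgroup \(G' \leqslant_f \overline G\) with \(G' \cap \overline Z = \{1\}\), simply by intersecting the kernels of finite quotients separating each of the finitely many nontrivial elements of \(\overline Z\) from \(1\). Then \(\pi|_{G'}\) is injective, so \(G'\) embeds as a finite-index subgroup of \(Q\). Hence \(\overline G\) and \(Q\) are commensurable, and \(\mathfrak P\)-subgroup separability passes directly to \(\overline G\). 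Since \(\overline H\) is a finitely generated \(\mathfrak P\)-subgroup of \(\overline G\), it is separable outright---no horizontal/vertical case split, no crossed homomorphisms, no appeal to the cohomological description of high residual finiteness.

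Your proposal, by contrast, never completes case~(a). You correctly identify that residual finiteness of \(\overline G\) alone gives finite quotients injective on \(\overline Z\), but then you assert that the highly-residually-finite hypothesis ``furnish[es] a finite quotient \dots\ compatible with the splitting'' without saying what this quotient is or why it exists; your final sentence is effectively an admission that the argument is unfinished. The separable-cohomology reformulation is correct but is a characterisation, not a construction, and you do not extract from it the map you need. Your case~(b) argument looks salvageable but is substantially more work than necessary. The single observation you are missing---commensurability of \(\overline G\) with \(Q\)---dissolves the entire ``vertical'' difficulty in both cases at once.
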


\begin{proof}
    Let \(H \leqslant G\) be a finitely generated subgroup of \(G\) satisfying \(\mathfrak P\).
    We will verify the hypotheses of Lemma~\ref{lem:sep_subgp_of_ext} for \(H\).
    To verify (i), let \(K \leqslant_f Z\) be a finite index subgroup of \(Z\).
    Since \(Z\) is finitely generated, there is a finite index characteristic subgroup \(Z' \leqslant_f Z\) with \(Z' \subseteq K\).
    It will suffice to show that \(HZ'\) is separable in \(G\), for \(HK\) is a finite union of right cosets of \(HZ'\).

    As \(Z'\) is characteristic in \(Z \lhd G\), \(Z'\) is normal in \(G\).
    Thus we have the exact sequence 
    \[1 \to Z/Z' \to G/Z' \to Q \to 1.\]
    Note that \(Z/Z'\) is a finite group.
    If \(Q\) is highly residually finite, then \(G/Z'\) is residually finite by definition.
    Otherwise, \(G/Z'\) is a finite split extension of a residually finite group, and is therefore also residually finite.

    In either case, there is a finite index subgroup \(G' \leqslant_f G/Z'\) with \(G' \cap Z/Z' = \{1\}\).
    That is, \(G'\) may be identified with a subgroup of finite index in \(Q\).
    Now \(G/Z'\) and \(Q\) are commensurate, so the assumption that \(Q\) is \(\mathfrak P\)-subgroup separable tells us that \(G/Z'\) is \(\mathfrak P\)-subgroup separable.
    The image of \(H\) in \(G/Z'\) is thus separable, as it is a finitely generated subgroup satisfying \(\mathfrak P\).
    As homomorphisms are open maps onto their images with respect to the profinite topology, the full preimage \(HZ'\) is closed in the profinite topology on \(G\), verifying (i).
    Finally, condition (ii) follows from the assumption that \(Z\) is strongly \(\mathfrak P\)-subgroup separable and that \(\mathfrak P\) is subgroup closed.
\end{proof}

Common examples of properties as in the statement above include being cyclic, abelian, or solvable.

\begin{remark}
\label{rem:central_ext_fp_quotient}
    For our main results, we will need the kernels of central extensions to be finitely generated.
    Finite generation of \(G\) together with finite presentability of the quotient \(Q\) implies this condition. 
    Indeed, in this case \(Z\) is finitely normally generated in \(G\) which, given that \(Z\) is central in \(G\), means that \(Z\) is finitely generated.
    Note that every countable abelian group is the centre of some finitely generated group \cite{Ould}.
\end{remark}

We conclude with a series of elementary observations about the separability of products of subgroups in groups.

\begin{lemma}
\label{lem:prod_with_subgroup_rz_quotient}
    Let \(H_1, \dots, H_n \leqslant G\) be finitely generated subgroups.
    If \(K \lhd G\) is normal and \(G/K\) is \(n\)-coset separable, then \(H_1 \dots H_n K\) is separable in \(G\).
\end{lemma}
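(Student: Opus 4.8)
The plan is to transport the whole question to the quotient $G/K$ via the canonical projection $\pi \colon G \to G/K$, where the hypothesis of $n$-coset separability applies verbatim. First I would note that since each $H_i$ is finitely generated, its image $\pi(H_i) \leqslant G/K$ is finitely generated as well, being generated by the images of a finite generating set of $H_i$. The assumption that $G/K$ is $n$-coset separable then yields immediately that the setwise product $\pi(H_1) \cdots \pi(H_n)$ is separable in $G/K$.

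The crux of the argument is to identify the full preimage of this product, namely to establish
\[
    \pi^{-1}\big(\pi(H_1) \cdots \pi(H_n)\big) = H_1 \cdots H_n K.
\]
The inclusion $\supseteq$ is clear because $\pi(K) = \{1\}$. For $\subseteq$, one takes $g$ with $\pi(g) = \pi(h_1) \cdots \pi(h_n)$ for some $h_i \in H_i$, so that $g (h_1 \cdots h_n)^{-1} \in \ker \pi = K$ and hence $g \in K H_1 \cdots H_n$. This is the one place where normality of $K$ is genuinely used: since $K \lhd G$ we may slide the $K$ factor rightwards past each $H_i$ in turn (using $Kh = h(h^{-1}Kh) = hK$), rewriting $K H_1 \cdots H_n = H_1 \cdots H_n K$. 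I expect this bookkeeping to be the only real content of the proof, and it is precisely where the hypothesis $K \lhd G$ enters; without normality the two products need not coincide.

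Finally I would invoke the standard fact that $\pi$ is continuous with respect to the profinite topologies: for any finite-index subgroup $M \leqslant_f G/K$ the preimage $\pi^{-1}(M)$ has finite index in $G$, so preimages of basic open sets are open, and consequently preimages of separable sets are separable. Applying this to the closed set $\pi(H_1) \cdots \pi(H_n)$ and combining with the identity above shows that $H_1 \cdots H_n K$ is closed in the profinite topology on $G$, as required. No delicate estimates are involved; the argument is essentially a continuity-plus-normality observation.
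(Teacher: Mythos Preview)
Your proof is correct and follows exactly the same approach as the paper's: pull back the separable product $\pi(H_1)\cdots\pi(H_n)$ via the continuous quotient map and identify the preimage as $H_1\cdots H_nK$. One small remark: the sliding step is unnecessary, since from $\pi(g)=\pi(h_1\cdots h_n)$ you can equally well write $(h_1\cdots h_n)^{-1}g\in K$, giving $g\in H_1\cdots H_nK$ directly; normality is already implicit in the existence of the quotient group $G/K$.
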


\begin{proof}
    Let \(\rho \colon G \to G/K\) be the quotient map.
    The subset \(H_1 \dots H_n K\) is the full preimage of \(\rho(H_1 \dots H_n) = \rho(H_1) \dots \rho(H_n)\), which is separable in \(G/K\) by the hypothesis.
    As homomorphisms are continuous with respect to the profinite topology, \(H_1 \dots H_n K\) is separable in \(G\).
\end{proof}

\begin{lemma}
\label{lem:prod_sep_if_int_of_subgp_nontriv}
    Let \(H_1, \dots, H_n \leqslant G\) be finitely generated subgroups.
    Suppose that the product \(H_1 \dots H_{n}\) contains a normal subgroup \(K \lhd G\) such that \(G/K\) is \(n\)-coset separable.
    Then the product \(H_1 \dots H_n\) is separable.
\end{lemma}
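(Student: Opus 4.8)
The plan is to obtain this as a refinement of Lemma~\ref{lem:prod_with_subgroup_rz_quotient}. That lemma already shows the saturated set \(H_1 \cdots H_n K\) is separable in \(G\), so it suffices to establish the purely algebraic identity
\[
    H_1 \cdots H_n K = H_1 \cdots H_n ,
\]
that is, that \(H_1 \cdots H_n\) is already a union of cosets of \(K\). Granting this, \(H_1 \cdots H_n\) coincides with a separable set and we are done.

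First I would record the two consequences of the hypothesis \(K \subseteq H_1 \cdots H_n\) that drive the argument. Left-multiplying by \(H_1\) and using \(H_1 H_1 = H_1\) gives \(H_1 K \subseteq H_1 H_1 H_2 \cdots H_n = H_1 \cdots H_n\), and symmetrically \(K H_n \subseteq H_1 \cdots H_n\). As \(K \lhd G\), these settle the case \(n = 2\) in one line: sliding \(K\) through by normality, \(H_1 H_2 K = H_1 K H_2 \subseteq H_1 H_2 H_2 = H_1 H_2\), and the reverse inclusion is trivial.

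For general \(n\) I would argue fibrewise. Let \(\rho \colon G \to G/K\) and \(\bar H_i = \rho(H_i)\), so that \(\rho^{-1}(\bar H_1 \cdots \bar H_n) = H_1 \cdots H_n K\). Call \(\bar t \in \bar H_1 \cdots \bar H_n\) \emph{full} if \(H_1 \cdots H_n\) contains the whole coset \(\rho^{-1}(\bar t)\); the identity to be proved says exactly that every point of \(\bar H_1 \cdots \bar H_n\) is full, while the hypothesis \(K \subseteq H_1 \cdots H_n\) says precisely that \(\bar 1\) is full. Now if \(\bar t\) is full and \(h_1 \in H_1\), then \(\bar h_1 \bar t\) is full as well, since \(h_1 \rho^{-1}(\bar t) = \rho^{-1}(\bar h_1 \bar t)\) and \(h_1 (H_1 \cdots H_n) = H_1 \cdots H_n\); symmetrically for right multiplication by \(H_n\). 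Thus the set of full points is invariant under left multiplication by \(\bar H_1\) and right multiplication by \(\bar H_n\), and being nonempty it contains the entire double coset \(\bar H_1 \bar H_n\). For \(n \le 2\) this already finishes the proof, because then \(\bar H_1 \bar H_n = \bar H_1 \cdots \bar H_n\) is the whole image.

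The step I expect to be the genuine obstacle is passing from this to general \(n\). Once \(n \ge 3\) the double coset \(\bar H_1 \bar H_n\) may be strictly smaller than \(\bar H_1 \cdots \bar H_n\), so left/right invariance alone no longer fills every fibre; and the naive rewriting fails in the same way, since sliding \(K\) inward and re-expanding it via \(K \subseteq H_1 \cdots H_n\) only produces the longer product \(H_1 \cdots H_n H_2 \cdots H_n\), whose middle junction \(H_n H_2\) does not collapse. To fill the remaining fibres I would have to use the containment \(K \subseteq H_1 \cdots H_n\) beyond its consequence for the single coset \(\bar 1\): fixing, for each \(z \in K\), a factorisation \(z = a_1 \cdots a_n\) with \(a_i \in H_i\), and using these factorisations to modify a chosen representative of an arbitrary fibre by an arbitrary element of \(K\) without leaving \(H_1 \cdots H_n\). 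Making this adjustment work uniformly over all fibres is the crux of the argument.
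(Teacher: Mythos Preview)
Your plan is exactly the paper's: its entire proof reads ``It follows from the hypotheses that \(H_1 \dots H_n = H_1 \dots H_n K\),'' followed by an appeal to Lemma~\ref{lem:prod_with_subgroup_rz_quotient}. So you have reconstructed the intended route, and the obstacle you isolate for \(n \ge 3\) is the very step the paper asserts without argument.

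Your hesitation is in fact justified: the identity \(H_1 \cdots H_n K = H_1 \cdots H_n\) can fail once \(n \ge 3\). In \(G = F_2 \times \ZZ\) with \(F_2 = \langle a,b\rangle\) and \(\ZZ = \langle z\rangle\), take \(H_1 = \langle az\rangle\), \(H_2 = \langle b\rangle\), \(H_3 = \langle a\rangle\), and \(K = \langle z\rangle\). Then \(K\) is central, and \(z^m = (az)^m \cdot 1 \cdot a^{-m}\) shows \(K \subseteq H_1 H_2 H_3\); but \(H_1 H_2 H_3 = \{a^i b^j a^k z^i : i,j,k \in \ZZ\}\), and one checks \(bz \in H_1 H_2 H_3 K \setminus H_1 H_2 H_3\). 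Thus no amount of ``using the factorisations of \(z \in K\)'' will rescue the identity in the stated generality, and the paper's one-line proof does not establish the lemma as written.

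What saves the paper is that every application of this lemma (in the proofs of Theorems~\ref{thm:central_dblcost} and~\ref{thm:main}) has \(K\) contained in a \emph{consecutive} pair \(H_i H_{i+1}\). Under that stronger hypothesis your \(n=2\) slide-and-absorb argument goes through verbatim: normality gives \(H_1 \cdots H_n K = H_1 \cdots H_i\, K\, H_{i+1} \cdots H_n\), and then \(H_i K H_{i+1} \subseteq H_i (H_i H_{i+1}) H_{i+1} = H_i H_{i+1}\). So your argument is already complete for the version of the lemma the paper actually needs; only the stated generality remains open.
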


\begin{proof}
    It follows from the hypotheses that \[H_1 \dots H_n = H_1 \dots H_n K. \]
    The result is now a consequence of Lemma~\ref{lem:prod_with_subgroup_rz_quotient}.
\end{proof}

\begin{lemma}
\label{lem:prod_with_Z_separable}
    Suppose that \(G\) is subgroup separable, \(Z\) is finitely generated, and \(Q\) is \(n\)-coset separable.
    Let \(H_1, \dots, H_n \leqslant G\) be finitely generated subgroups.
    If \(K \leqslant_f Z\), then the product \(H_1 \dots H_n K\) is separable in \(G\).
\end{lemma}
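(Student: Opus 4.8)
The plan is to reduce the statement to the \(n\)-coset separability of the quotient \(G/K\) and then to exploit that this quotient is virtually the group \(Q\). The tool for the reduction is Lemma~\ref{lem:prod_with_subgroup_rz_quotient}, which is tailored to exactly this situation. First I would record the two structural facts that let it apply with \(K\) itself playing the role of the normal subgroup. Since \(Z\) is central in \(G\) and \(K \leqslant Z\), the subgroup \(K\) is normal in \(G\). Moreover, as \(K\) has finite index in the finitely generated group \(Z\), it is itself finitely generated, and hence separable in \(G\) by the assumption that \(G\) is subgroup separable. With \(K \lhd G\) in hand, Lemma~\ref{lem:prod_with_subgroup_rz_quotient} reduces the claim that \(H_1 \dots H_n K\) is separable in \(G\) to the single assertion that \(G/K\) is \(n\)-coset separable.

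Next I would analyse \(G/K\). It fits into the central extension
\[
    1 \to Z/K \to G/K \to Q \to 1,
\]
in which the kernel \(Z/K\) is finite. The first step is to observe that \(G/K\) is residually finite: given \(gK \neq K\), separability of the normal subgroup \(K\) in \(G\) yields a finite-index subgroup \(M \leqslant_f G\) with \(K \subseteq M\) and \(g \notin M\), and replacing \(M\) by its normal core (which still contains the normal subgroup \(K\) and excludes \(g\)) produces a finite-index normal subgroup of \(G\) containing \(K\) and avoiding \(g\), i.e.\ a finite-index normal subgroup of \(G/K\) avoiding \(gK\). Having residual finiteness, I would separate each of the finitely many nontrivial elements of the finite central subgroup \(Z/K\) from the identity and intersect the resulting finite-index normal subgroups to obtain \(\bar N \lhd_f G/K\) with \(\bar N \cap (Z/K) = \{1\}\). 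The projection \(G/K \to Q\) then restricts to an injection on \(\bar N\) whose image has finite index in \(Q\); thus \(\bar N\) is simultaneously a finite-index subgroup of \(G/K\) and isomorphic to a finite-index subgroup of \(Q\), so \(G/K\) and \(Q\) are commensurable.

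Finally I would invoke the stability of \(n\)-coset separability under commensurability: since \(Q\) is \(n\)-coset separable, so is its finite-index subgroup \(\bar N\), and therefore so is the finite-index overgroup \(G/K\). Combined with the reduction of the first paragraph, this gives the separability of \(H_1 \dots H_n K\).

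The step I expect to be the crux is the passage from \(Q\) to \(G/K\). Two points deserve care there. The construction of \(\bar N\) is precisely where subgroup separability of \(G\), rather than mere residual finiteness, is essential: finite central extensions need not preserve residual finiteness, so the residual finiteness of \(G/K\) genuinely relies on \(K\) being separable (and normal) in \(G\). The second point is the appeal to commensurability invariance of \(n\)-coset separability, which must be available (in both directions, for finite-index subgroups and overgroups) and invoked cleanly.
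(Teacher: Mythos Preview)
Your argument has a genuine gap at the very first step: you assert that ``\(Z\) is central in \(G\)'', but in the context of this lemma the extension \(1 \to Z \to G \to Q \to 1\) is not assumed to be central---\(Z\) is merely normal. Consequently a finite-index subgroup \(K \leqslant_f Z\) need not be normal in \(G\), and you cannot invoke Lemma~\ref{lem:prod_with_subgroup_rz_quotient} with \(K\) itself. Everything downstream (the residual finiteness of \(G/K\), the construction of \(\bar N\), the commensurability with \(Q\)) rests on \(K \lhd G\), so the proof as written does not go through.

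The fix is short and is exactly what the paper does: since \(Z\) is finitely generated, choose a characteristic subgroup \(K' \leqslant_f Z\) with \(K' \subseteq K\); then \(K' \lhd G\) because \(Z \lhd G\). Your argument now applies verbatim to \(K'\) in place of \(K\): separability of \(K'\) in \(G\) gives residual finiteness of \(G/K'\), the finite normal subgroup \(Z/K'\) can be avoided by a finite-index normal \(\bar N\), and one concludes that \(G/K'\) is commensurable with \(Q\) and hence \(n\)-coset separable. Lemma~\ref{lem:prod_with_subgroup_rz_quotient} then gives separability of \(H_1 \dots H_n K'\), and \(H_1 \dots H_n K\) is a finite union of right translates of this set. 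With this correction your route and the paper's coincide: the paper produces a finite-index \(G' \leqslant_f G\) with \(G' \cap Z = K'\) and observes \(G'/K' \cong Q' \leqslant_f Q\), which is precisely your commensurability statement phrased on the level of \(G\) rather than \(G/K'\).
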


\begin{proof}
    Let \(K \leqslant_f Z\) be an arbitrary finite index subgroup of \(Z\).
    Since \(Z\) is finitely generated, there is \(K' \leqslant_f K\) which is characteristic in \(Z\), and hence normal in \(G\).
    Since \(G\) is subgroup separable, \(K'\) is separable in \(G\).
    Therefore there is \(G' \leqslant_f G\) with \(G' \cap Z = K'\).
    Since \(K'\) is normal in \(G\), we may take \(G'\) to be normal in \(G\) also (replacing it by its normal core if necessary).
    Thus \(G'\) fits into the exact sequence \(1 \to K' \to G' \to Q' \to 1\), where \(Q' = \pi(G') \leqslant_f Q\) is a finite index subgroup of \(Q\).
    As \(Q'\) is a subgroup of \(Q\), it is also \(n\)-coset separable.
    
    For each \(i\), let \(T_i = G' \cap H_i \leqslant_f H_i\).
    Since \(G'\) is normal, any \(G\)-conjugate of \(T_i\) is contained in \(G'\).
    Hence the product \(T_1^{g_1} \dots T_n^{g_n}\) is contained in \(G'\) for any \(g_1, \dots, g_n \in G\).
    Now \(G'/K' \cong Q'\) is \(n\)-coset separable so by Lemma~\ref{lem:prod_with_subgroup_rz_quotient}, the product \(T_1^{g_1} \dots T_n^{g_n} K'\) is separable in \(G'\).
    As \(H_1 \dots H_n K\) is a finite union of translates of such products, the lemma follows from this statement.
\end{proof}

\section{Double coset separability}

In this section we will prove Theorems~\ref{thm:central_dblcost} and \ref{thm:prod_with_nilp}.
Let \(G\) be an extension of \(Q\) by \(Z\), so that \(\pi \colon G \to Q\) is a surjective homomorphism with kernel \(Z = \ker \pi \leqslant G\).

\begin{lemma}
\label{lem:subgp_Z_double_coset_sep}
    Suppose \(G\) and \(Q\) are subgroup separable, and that \(Z\) is finitely generated.
    Let \(H \leqslant G\) be finitely generated and let \(K \leqslant Z\) be an arbitrary subgroup.
    If \((H \cap Z)K\) is separable in \(Z\), then \(HK\) is separable in \(G\).
\end{lemma}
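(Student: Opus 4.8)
The plan is to show directly that the complement of \(HK\) is open in the profinite topology on \(G\). Fix an arbitrary \(g \in G \setminus HK\); the goal is to produce a finite-index normal subgroup \(L' \lhd G\) contained in \(Z\) with \(g \notin HKL'\) while \(HKL'\) is separable in \(G\). Since \(HK \subseteq HKL'\), the complement of the closed set \(HKL'\) will then be an open neighbourhood of \(g\) disjoint from \(HK\), and as \(g\) was arbitrary this proves \(HK\) closed. First I dispose of the easy case \(\pi(g) \notin \pi(H)\): here \(\pi(H)\) is a finitely generated subgroup of the subgroup separable group \(Q\), hence separable, so there is a finite quotient \(\phi \colon Q \to F\) with \(\phi(\pi(g)) \notin \phi(\pi(H))\); since \(K \leqslant Z = \ker \pi\) we have \((\phi \circ \pi)(HK) = (\phi \circ \pi)(H)\), so \(\phi \circ \pi\) separates \(g\) from \(HK\) and there is nothing more to do.

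So I may assume \(\pi(g) \in \pi(H)\) and write \(g = hz\) with \(h \in H\) and \(z \in Z\). Since \((H \cap Z)K \subseteq HK\) and \(z = h^{-1}g \notin HK\) (otherwise \(g = hz \in HK\)), I obtain \(z \notin (H \cap Z)K\). Now the hypothesis that \((H \cap Z)K\) is separable in \(Z\) provides a finite-index subgroup \(L \leqslant_f Z\) with \(z \notin (H \cap Z)KL\); since \(Z\) is finitely generated I may shrink \(L\) to a finite-index characteristic subgroup \(L' \leqslant L\) of \(Z\), which is then normal in \(G\) and still satisfies \(z \notin (H \cap Z)KL'\). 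A short computation applying \(\pi\) then shows \(g \notin HKL'\): if \(g = h_1 k_1 l_1\) with \(h_1 \in H\), \(k_1 \in K\), \(l_1 \in L'\), then \(z = h^{-1}h_1 k_1 l_1\) and projecting to \(Q\) forces \(h^{-1}h_1 \in H \cap Z\), whence \(z \in (H \cap Z)KL'\), a contradiction.

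It remains to verify that \(HKL'\) is separable in \(G\), which I regard as the crux of the argument. As \(L'\) has finite index in the finitely generated group \(Z\) it is itself finitely generated, and being normal in \(G\) the product \(HL'\) is a finitely generated subgroup of \(G\); since \(G\) is subgroup separable, \(HL'\) is separable, so (homomorphisms being open onto their images) its image \(\overline{H} = HL'/L'\) is closed in \(\overline{G} = G/L'\). Because \(L'\) has finite index in \(Z\), the image \(\overline{Z} = Z/L'\) is finite, so \(\overline{K} = KL'/L'\) is finite and \(\overline{HK} = \overline{H}\,\overline{K}\) is a finite union of cosets of \(\overline{H}\), hence closed in \(\overline{G}\); its full preimage \(HKL'\) is therefore closed in \(G\). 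The main obstacle is precisely this last passage: \(HK\) is not a subgroup, so one cannot invoke subgroup separability of \(G\) directly, and the device of reducing modulo \(L'\) is what collapses \(Z\) to a finite group and turns \(HK\) into a finite union of cosets of the genuine, separable subgroup \(HL'\). The separability hypothesis on \((H \cap Z)K\) enters only to guarantee that \(g\) survives this collapse, i.e. that \(L'\) can be chosen with \(g \notin HKL'\).
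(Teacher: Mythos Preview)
Your proof is correct and follows the same overall strategy as the paper's: approximate \(HK\) by sets \(HKL'\) with \(L'\) a characteristic finite-index subgroup of \(Z\), show each such set is separable in \(G\), and use the separability of \((H\cap Z)K\) in \(Z\) to ensure that a given \(g\notin HK\) is excluded from some \(HKL'\). The paper packages this as \(HK=\bigcap_n HKZ_n\) and performs essentially the same coset computation you do in your second paragraph.

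Where you diverge is in the justification that \(HKL'\) is separable. The paper appeals to its earlier Lemma~\ref{lem:prod_with_Z_separable} (applied with \(n=1\)), which passes to a finite-index subgroup \(G'\leqslant_f G\) with \(G'\cap Z=L'\) and then uses separability of \(\pi(H)\) in \(Q\). You instead observe directly that, since \(L'\lhd G\) and both \(H\) and \(L'\) are finitely generated, the product \(HL'\) is a finitely generated \emph{subgroup} of \(G\), hence separable by subgroup separability of \(G\); then \(HKL'\) is a finite union of its cosets. This is a genuine simplification: your Case~2 never uses the hypothesis that \(Q\) is subgroup separable. You do invoke it in Case~1, but that case is equally handled by noting that \(HZ\) is itself a finitely generated subgroup of \(G\) (hence separable) containing \(HK\) and missing \(g\). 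So your argument in fact establishes the lemma under the weaker assumption that only \(G\) is subgroup separable.
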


\begin{proof}
    Let \(Z_n \lhd_f Z\) be an enumeration of normal cores of the finite index subgroups of \(Z\). 
    By our assumption, \((H \cap Z)K\) is separable in \(Z\), so that
    \begin{equation}
    \label{eq:approx_dbl_coset}
        \bigcap_{n \geq 1} (H \cap Z) K Z_n = (H \cap Z) K.
    \end{equation}
    
    For each \(n\), Lemma~\ref{lem:prod_with_Z_separable} tells us that the double coset \(H Z_n\) is separable in \(G\).
    Since \(Z_n \lhd_f Z\) and \(K \leqslant Z\), the triple coset \(HKZ_n\) is a finite union of right translates of \(H Z_n\), and is thus separable in \(G\) also.
    We will show that the double coset \(HK\) is approximated by such triple cosets.
    
    Suppose that \(g \in HKZ_n\) for every \(n \geq 1\), so that \(g = h_n k_n z_n\) with \(h_n \in H, k_n \in K, z_n \in Z_n\).
    We write \(h = h_1, k = k_1,\) and \(z = z_1\) for ease of reading.
    Then for each \(n \geq 1\),
    \[
        h_n^{-1} h = k_n z_n z^{-1} k^{-1} \in H \cap Z.
    \]
    Rearranging, then, we have for all \(n \geq 1\)
    \[
        kzk^{-1} = (h^{-1} h_n) (k_n k^{-1}) (k z_n k^{-1}) \in (H \cap Z)KZ_n,
    \]
    where the inclusion uses the fact that \(Z_n \lhd Z\), so \(kz_nk^{-1} \in Z_n\).
    It follows from (\ref{eq:approx_dbl_coset}) that \(kzk^{-1} \in (H \cap Z)K\).
    But then \(g = hkz = hkzk^{-1}k \in H(H \cap Z)K K = HK\).
    This shows that \(HK = \bigcap_{n \geq 1} HKZ_n\) is closed.
\end{proof}

As an immediate consequence, we obtain the following.

\begin{proposition}
\label{prop:quotient_ext_LERF}
    Suppose \(G\) and \(Q\) are subgroup separable, \(Z\) is slender and double coset separable, and \(K \leqslant Z\).
    If \(K \lhd G\) then \(G/K\) is subgroup separable.
\end{proposition}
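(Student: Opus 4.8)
The plan is to reduce subgroup separability of \(G/K\) to a separability statement inside \(G\), and then verify the latter by feeding Lemma~\ref{lem:subgp_Z_double_coset_sep} into Lemma~\ref{lem:sep_subgp_of_ext}. Write \(\rho \colon G \to G/K\) for the quotient map and let \(\bar H \leqslant G/K\) be an arbitrary finitely generated subgroup; the goal is to show it is separable. Set \(H = \rho^{-1}(\bar H)\), so that \(K \leqslant H\) and \(H/K \cong \bar H\). Since \(\rho\) is continuous and open with respect to the profinite topologies (the openness was already used in the proof of Proposition~\ref{lem:ERF_by_LERF_good}), the subgroup \(\bar H\) is separable in \(G/K\) if and only if its full preimage \(H\) is separable in \(G\); it therefore suffices to prove the latter.

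First I would record that \(H\) is finitely generated. As \(Z\) is slender, every subgroup of \(Z\) is finitely generated; in particular \(K \leqslant Z\) is finitely generated. Since \(H\) is an extension of the finitely generated group \(\bar H \cong H/K\) by the finitely generated group \(K\), it is itself finitely generated. This is precisely the hypothesis on \(H\) demanded by Lemma~\ref{lem:subgp_Z_double_coset_sep}.

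Next I would verify the two conditions of Lemma~\ref{lem:sep_subgp_of_ext} for \(H\). For condition (ii), observe that \(H \cap Z\) is a subgroup of the slender group \(Z\), hence finitely generated; as \(Z\) is double coset separable and therefore subgroup separable, \(H \cap Z\) is separable in \(Z\). For condition (i), fix an arbitrary \(K' \leqslant_f Z\). Both \(H \cap Z\) and \(K'\) are finitely generated subgroups of \(Z\) (the latter because it has finite index in the finitely generated group \(Z\)), so double coset separability of \(Z\) gives that \((H \cap Z)K'\) is separable in \(Z\). Since \(G\) and \(Q\) are subgroup separable and \(Z\) is finitely generated, Lemma~\ref{lem:subgp_Z_double_coset_sep} applies to \(H\) and \(K'\), yielding that \(HK'\) is separable in \(G\). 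This establishes (i), and Lemma~\ref{lem:sep_subgp_of_ext} then gives that \(H\) is separable in \(G\), as required.

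The argument is in essence a matter of checking that the hypotheses of the two cited lemmas line up, and the step most in need of care is the finite generation of the preimage \(H\): this is where slenderness of \(Z\) is genuinely used, since without finite generation of \(K\) one cannot conclude that \(H\) is finitely generated, and Lemma~\ref{lem:subgp_Z_double_coset_sep} would then be unavailable. The only other point worth stating explicitly is the passage between separability of \(\bar H\) in \(G/K\) and of \(H\) in \(G\), which rests on the quotient map being open with respect to the profinite topology.
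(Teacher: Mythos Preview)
Your argument is correct, but it takes a detour compared with the paper. The paper observes that the full preimage of a finitely generated \(\bar H \leqslant G/K\) can be written as a double coset \(H_0 K\), where \(H_0 \leqslant G\) is generated by lifts of a finite generating set for \(\bar H\). It then applies Lemma~\ref{lem:subgp_Z_double_coset_sep} \emph{once}, directly to the pair \((H_0, K)\): slenderness makes \(H_0 \cap Z\) and \(K\) finitely generated, so double coset separability of \(Z\) gives that \((H_0 \cap Z)K\) is separable in \(Z\), and the lemma yields separability of \(H_0 K\) in \(G\). Openness of \(\rho\) finishes as you do.

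You instead take the full preimage \(H\), argue it is finitely generated (using slenderness to get \(K\) finitely generated), and then invoke Lemma~\ref{lem:sep_subgp_of_ext}, which forces you to check separability of \(HK'\) for \emph{every} \(K' \leqslant_f Z\) via Lemma~\ref{lem:subgp_Z_double_coset_sep}. This is valid but redundant: since \(K \leqslant H\), you already have \(H = HK\), and a single application of Lemma~\ref{lem:subgp_Z_double_coset_sep} with the specific subgroup \(K\) would suffice, bypassing Lemma~\ref{lem:sep_subgp_of_ext} entirely. The paper's route is shorter and makes clearer that the full strength of Lemma~\ref{lem:subgp_Z_double_coset_sep} (allowing arbitrary \(K \leqslant Z\), not just finite index) is exactly what is needed here.
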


\begin{proof}
    Under the quotient map \(G \to G/K\), the full preimage of any finitely generated subgroup of \(G/K\) is a double coset \(HK\), where \(H\) is a finitely generated subgroup of \(G\).
    By Lemma~\ref{lem:subgp_Z_double_coset_sep}, \(HK\) is separable in \(G\).
    Now, homomorphisms are open maps onto their images with respect to the profinite topologies, yielding the result.
\end{proof}

In the special case of a central extension, the kernel of the extension is abelian and so product separable.
Thus the hypotheses of the above proposition is met, given that the group and its quotient are subgroup separable.

\begin{corollary}
\label{lem:central_quotient_LERF}
    Let \(G\) be a central extension of a subgroup separable group by finitely generated \(Z \leqslant Z(G)\).
    If \(G\) is subgroup separable then so is \(G/K\) for any \(K \leqslant Z\).
\end{corollary}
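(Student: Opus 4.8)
The plan is to obtain this as a direct application of Proposition~\ref{prop:quotient_ext_LERF} with \(Q = G/Z\), so the work consists entirely of checking that the hypotheses of that proposition are satisfied. By assumption \(G\) is subgroup separable, and \(Q\) is precisely the subgroup separable group of which \(G\) is a central extension, so the requirement that both \(G\) and \(Q\) be subgroup separable is immediate.

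Next I would deal with the kernel \(Z\). Since \(Z \leqslant Z(G)\) it is abelian, and it is finitely generated by hypothesis, so \(Z\) is a finitely generated abelian group. As recorded in the introduction, finitely generated abelian groups are product separable, hence in particular double coset separable; they are moreover slender, since every subgroup of a finitely generated abelian group is again finitely generated. This verifies that \(Z\) is both slender and double coset separable.

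Finally, the hypothesis \(K \leqslant Z\) is given, and because \(K \leqslant Z \leqslant Z(G)\) every element of \(K\) commutes with all of \(G\), whence \(K \lhd G\). With all the hypotheses of Proposition~\ref{prop:quotient_ext_LERF} in place, we conclude that \(G/K\) is subgroup separable. I do not expect a genuine obstacle here, as the statement is a formal consequence of the preceding proposition; the only point requiring a moment's care is observing that a finitely generated central kernel is automatically a finitely generated abelian group, and therefore simultaneously slender and double coset separable.
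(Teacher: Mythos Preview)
Your proposal is correct and matches the paper's argument essentially verbatim: the paper also derives the corollary directly from Proposition~\ref{prop:quotient_ext_LERF} by noting that the central kernel \(Z\) is finitely generated abelian, hence slender and (product, so in particular) double coset separable. The additional observation that \(K \lhd G\) because \(K \leqslant Z(G)\) is implicit in the paper and you have made it explicit, which is fine.
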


The following is implicitly proven as \cite[Theorem 3.3]{You}; the result is only stated for \(Q\) a free group but the proof only uses the fact that it is double coset separable.

\begin{proposition}
\label{prop:double_coset_sep_times_Z}
    Suppose \(Q\) is double coset separable and \(Z = \langle a \rangle\) is infinite cyclic.
    Then the direct product \(Q \times Z\) is double coset separable.
\end{proposition}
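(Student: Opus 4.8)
The plan is to reduce the separability of a double coset $HK$ (for finitely generated $H,K\leqslant G=Q\times Z$) to the same problem in the finite quotients $G_n:=G/\langle a^n\rangle\cong Q\times(Z/\langle a^n\rangle)$, exploiting double coset separability of $Q$ in the ``$Q$-direction'' and residual finiteness of $Z\cong\mathbb Z$ in the ``$Z$-direction''. Writing $\pi\colon G\to Q$ for the projection with kernel $Z$, the scheme is: (i) prove each $G_n$ is double coset separable; (ii) conclude from Lemma~\ref{lem:prod_with_subgroup_rz_quotient} (applied to the two subgroups $H,K$ and the normal subgroup $\langle a^n\rangle\lhd G$) that $HK\langle a^n\rangle$ is separable in $G$ for every $n$; and (iii) show that $HK=\bigcap_{n\geq 1}HK\langle a^n\rangle$, so that $HK$ is an intersection of closed sets.

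The structural heart of the argument, and what makes (iii) work, is a description of how $HK$ meets the fibres of $\pi$. By Lemma~\ref{lem:double_coset_int_central_is_subgp} the set $P:=HK\cap Z$ is a subgroup of $Z$, and I would show that for each $q\in\pi(HK)$ the fibre $S_q:=\{z\in Z:(q,z)\in HK\}$ is a single coset $z_0+P$, where $(q,z_0)$ is any chosen element of $HK$ lying over $q$. Here centrality of $Z$ is indispensable: if $(q,z_0)=hk$ and $(1,p)=h'k'\in HK\cap Z$, then sliding the central element $(1,p)$ into the middle gives $(q,z_0)(1,p)=h(h'k')k=(hh')(k'k)\in HK$, so $z_0+P\subseteq S_q$; conversely, for $(q,z_1)=h_1k_1\in HK$ the element $w:=(q,z_1)(q,z_0)^{-1}=h_1k_1k_0^{-1}h_0^{-1}$ lies in $Z$, and conjugating this central $w$ by $h_0$ rewrites it as $(h_0^{-1}h_1)(k_1k_0^{-1})\in HK$, whence $z_1-z_0\in P$. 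I expect this fibrewise coset description to be the main obstacle, since for a genuine (non-central) extension the product of two elements of $HK$ need not return to $HK$ and the description fails.

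Granting this, step (iii) is quick: if $g=(q_0,m_0)$ lies in every $HK\langle a^n\rangle$ then $q_0\in\pi(HK)$ and $m_0-z_0\in P+\langle a^n\rangle$ for all $n$; as $P$ is a subgroup of the infinite cyclic group $Z$ it is separable there, so $\bigcap_n(P+\langle a^n\rangle)=P$ and hence $m_0\in z_0+P=S_{q_0}$, i.e.\ $g\in HK$. For step (i) I would prove the general principle that double coset separability passes to finite-index overgroups: in $G_n$ the factor $Q\cong Q\times 1$ has finite index, and for finitely generated $A,B\leqslant G_n$ one has $AB=\bigcup_{i,j}a_i(A_0B_0)b_j$ with $A_0=A\cap Q$ and $B_0=B\cap Q$ of finite index; each $A_0B_0$ is separable in $Q$, hence separable in $G_n$ because a subset of the closed finite-index subgroup $Q$ that is closed in the profinite topology of $Q$ is closed in that of $G_n$; and left and right translation preserve separability. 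Thus $G_n$ is double coset separable, which is exactly what (ii) requires, completing the argument.
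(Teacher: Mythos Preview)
Your argument is correct. Note, however, that the paper does not give its own proof of this proposition: it cites You \cite[Theorem~3.3]{You}, observing that You's proof for free \(Q\) only uses double coset separability. That said, the paper's proof of Theorem~\ref{thm:central_dblcost} (specialised to \(Z\cong\ZZ\)) does independently reprove the statement, and it is instructive to compare your route with that one.

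Both arguments reduce to the quotients \(G_n=G/\langle a^n\rangle\), show these are double coset separable by commensurability with \(Q\), and then approximate \(HK\) by the separable sets \(HK\langle a^n\rangle\). The difference is in how the equality \(HK=\bigcap_n HK\langle a^n\rangle\) is established. The paper splits into cases on \(P=HK\cap Z\): when \(P\) is nontrivial it invokes Lemma~\ref{lem:prod_sep_if_int_of_subgp_nontriv} (since \(P\lhd G\) and \(G/P\) is commensurable with \(Q\)), and when \(P\) is trivial it runs the calculation of Lemma~\ref{lem:dbl_coset_sep_if_int_trivial}. Your fibre description---that \(\{z\in Z:(q,z)\in HK\}\) is a single coset \(z_0+P\) for each \(q\in\pi(HK)\)---unifies these two cases into one argument and makes the role of centrality (via Lemma~\ref{lem:double_coset_int_central_is_subgp} and the conjugation trick) more transparent. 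The paper's case split, on the other hand, is what generalises cleanly to higher-rank \(Z\) via induction, which is the point of Theorem~\ref{thm:central_dblcost}.
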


It is an immediate consequence that \(Q \times A\) is double coset separable whenever \(Q\) is double coset separable and \(A\) is finitely generated abelian.
As stated in the introduction, we are able to extend this result in a couple of ways.

\begin{lemma}
\label{lem:dbl_coset_sep_if_int_trivial}
    Suppose \(Z\) is finitely generated and residually finite.
    Let \(H, K \leqslant G\) be subgroups with \(HK \cap Z = \{1\}\).
    If \(G/Z'\) is double coset separable for every \(Z' \leqslant_f Z\) with \(Z' \lhd G\), then \(HK\) is separable in \(G\).
\end{lemma}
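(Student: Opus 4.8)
The plan is to realise \(HK\) as an intersection of separable sets, each coming from a finite quotient of \(G\). Write \(\pi \colon G \to Q\) for the quotient with kernel \(Z\), and let \(\mathcal{Z}\) be the family of finite-index subgroups \(Z' \leqslant_f Z\) that are normal in \(G\). Since \(Z\) is finitely generated and residually finite, for each nontrivial \(z \in Z\) there is a characteristic finite-index subgroup of \(Z\) avoiding \(z\); such subgroups are normal in \(G\), so \(\bigcap_{Z' \in \mathcal{Z}} Z' = \{1\}\). I will show that
\[
    HK = \bigcap_{Z' \in \mathcal{Z}} HKZ',
\]
and that each \(HKZ'\) is separable in \(G\). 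As separable subsets are exactly the closed ones and closedness is preserved under intersection, this proves the lemma.

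The key step, and the only place the hypothesis \(HK \cap Z = \{1\}\) enters, is that the restriction \(\pi|_{HK}\) is injective. Suppose \(h_1 k_1 = h_2 k_2 z\) with \(h_i \in H\), \(k_i \in K\) and \(z \in Z\). Rearranging gives
\[
    (h_2^{-1} h_1)(k_1 k_2^{-1}) = k_2 z k_2^{-1},
\]
where the left-hand side lies in \(HK\) and, because \(Z\) is normal in \(G\), the right-hand side lies in \(Z\). Hence both sides lie in \(HK \cap Z = \{1\}\), forcing \(z = 1\) and thus \(h_1 k_1 = h_2 k_2\). I would emphasise that only normality of \(Z\) (automatic, as \(Z = \ker \pi\)) is used here, not centrality.

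Granting injectivity, the nontrivial inclusion \(\bigcap_{Z' \in \mathcal{Z}} HKZ' \subseteq HK\) follows quickly. If \(g\) lies in every \(HKZ'\), then \(\pi(g) \in \pi(H)\pi(K)\) since \(\pi(Z') = \{1\}\); fix \(h_0 \in H\), \(k_0 \in K\) with \(g = h_0 k_0 z_0\) for some \(z_0 \in Z\). For each \(Z' \in \mathcal{Z}\) write \(g = h k z'\) with \(z' \in Z'\); then \(\pi(hk) = \pi(g) = \pi(h_0 k_0)\), so injectivity of \(\pi|_{HK}\) gives \(hk = h_0 k_0\), whence \(z_0 = z' \in Z'\). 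As this holds across the whole family, \(z_0 \in \bigcap_{Z' \in \mathcal{Z}} Z' = \{1\}\), so \(g = h_0 k_0 \in HK\). The reverse inclusion is immediate from \(1 \in Z'\).

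Finally, I would verify that each \(HKZ'\) is separable. Here I take \(H\) and \(K\) to be finitely generated, so that their images under the quotient \(\rho_{Z'} \colon G \to G/Z'\) are finitely generated subgroups and \(HKZ' = \rho_{Z'}^{-1}\bigl(\rho_{Z'}(H)\rho_{Z'}(K)\bigr)\). Since \(G/Z'\) is double coset separable by hypothesis, \(\rho_{Z'}(H)\rho_{Z'}(K)\) is separable in \(G/Z'\), and as \(\rho_{Z'}\) is continuous for the profinite topologies its preimage \(HKZ'\) is separable in \(G\). The main obstacle is the injectivity step: once one sees that \(HK \cap Z = \{1\}\) forces \(\pi|_{HK}\) to be injective, the approximation of \(HK\) by the closed sets \(HKZ'\) is essentially formal, the only point needing attention being the finite generation of \(H\) and \(K\), which is what allows double coset separability of the quotients to be invoked.
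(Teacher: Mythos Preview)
Your proof is correct and follows essentially the same strategy as the paper: approximate \(HK\) by the separable sets \(HKZ'\) and use the hypothesis \(HK \cap Z = \{1\}\) together with normality of \(Z\) to show the intersection over all \(Z'\) collapses to \(HK\). Your isolation of the injectivity of \(\pi|_{HK}\) as the crux is a slightly cleaner packaging of the same algebraic manipulation the paper carries out inline, and your explicit remark that \(H\) and \(K\) must be finitely generated to invoke double coset separability of \(G/Z'\) is a point the paper's lemma statement leaves implicit.
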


\begin{proof}
    As \(Z\) is finitely generated and residually finite, there is a sequence \(\{Z_n\, | \, n \in \NN\}\) of finite index characteristic subgroups of \(Z\) with \(\bigcap_{n\geq1}Z_n = \{1\}\).
    Moreover, as each \(Z_n\) characteristic in \(Z\), we have that \(Z_n \lhd G\).
    By assumption then, the groups \(G/Z_n\) are double coset separable.
    Hence by Lemma~\ref{lem:prod_with_subgroup_rz_quotient}, each of the triple cosets \(H K Z_n\) is separable in \(G\).

    Suppose that \(g \notin HK\) but \(g\) belongs to the profinite closure of \(HK\).
    As \(G/Z\) is double coset separable by assumption, the triple coset \(HKZ\) is separable in \(G\).
    Since this subset contains \(HK\), we must have \(g \in HKZ\).
    That is, we have \(g = h k z\) for some \(h \in H, k \in K,\) and \(z \in Z\).
    Similarly, \(g \in H K Z_n = H Z_n K\) for every \(n \geq 1\).
    That is, for each \(n \geq 1\), we have \(g = h_n z_n k_n\), where \(h_n \in H, k_n \in K,\) and \(z_n \in Z_n\).
    It follows that
    \[
        h_n^{-1} h k k_n^{-1} = z_n (k_n z^{-1} k_n^{-1}) \in H K \cap Z,
    \]
    where the membership is due to the fact that \(Z\) is normal in \(G\).
    By assumption \(H K \cap Z = \{1\}\), so that \(z_n k_n z^{-1} k_n^{-1} = 1\) for every \(n \geq 1\).
    But then \(z = k_n^{-1} z_n k_n \in Z_n\) for all \(n \geq 1\).
    As \(\bigcap_{n \geq 1} Z_n = \{1\}\), we conclude that \(z = 1\) and so \(g \in H K\) as required.
\end{proof}

We are now ready to prove Theorem~\ref{thm:central_dblcost}.

\begin{proof}[Proof of Theorem~\ref{thm:central_dblcost}]
    Suppose that \(G\) is a central extension of a double coset separable group \(Q\) by a finitely generated group \(Z\).
    We will induct on the free rank \(r(Z)\) of \(Z\).
    If \(r(Z) = 0\), then \(Z\) is finite.
    In this case, \(G\) is residually finite by assumption and is thus commensurate to \(Q\), which is double coset separable.
    Suppose then that \(r(Z) > 0\) and that the statement holds for all lesser values.
    The group \(Z\) has a finite index free abelian subgroup \(Z'\).
    As \(G\) is subgroup separable, there is a finite index subgroup \(G' \leqslant_f G\) such that \(G' \cap Z = Z'\).
    We write \(Q' = \pi(G') \leqslant_f Q\), which is also double coset separable.
    The sequence \(1 \to Z' \to G' \to Q' \to 1\) is thus again a short exact sequence for a central extension, and \(G\) is double coset separable if and only if \(G'\) is.
    We may therefore assume without loss of generality that \(Z = Z'\) is free abelian (so \(G = G'\) and \(Q = Q'\)).

    Let \(H_1, H_2 \leqslant G\) be finitely generated subgroups of \(G\).
    If the intersection \(K = H_1H_2 \cap Z\) is nontrivial, then it is a subgroup of \(Z\) by Lemma~\ref{lem:double_coset_int_central_is_subgp}.
    Moreover, as \(Z\) is torsionfree, the subgroup \(K\) is infinite in this case.
    
    We have the formula of ranks \(r(Z) = r(K) + r(Z/K)\).
    As \(K\) is infinite, its rank is at least \(1\) and therefore \(r(Z/K) < r(Z)\).
    Now the group \(G/K\) fits into the short exact sequence \(1 \to Z/K \to G/K \to Q \to 1\) and is subgroup separable by Corollary~\ref{lem:central_quotient_LERF}.
    The induction hypothesis thus implies that \(G/K\) is double coset separable.
    But then \(H_1H_2\) is separable by Lemma~\ref{lem:prod_sep_if_int_of_subgp_nontriv}, so we are done.
    
    Suppose instead that \(H_1H_2 \cap Z = \{1\}\).
    We need only verify the hypotheses of Lemma~\ref{lem:dbl_coset_sep_if_int_trivial}.
    Let \(Z' \leqslant_f Z\) be such that \(Z' \lhd G\).
    As \(Z'\) has finite index in \(Z\), we have \(r(Z/Z') = 0\).
    In particular, \(G/Z'\) is a double coset separable by the base case of the induction.
    Therefore Lemma~\ref{lem:dbl_coset_sep_if_int_trivial} shows that \(H_1 H_2\) is separable in \(G\).
\end{proof}

When the extension above is split (i.e. \(G\) is a direct product of \(Q\) and \(Z\)), \(G\) must be subgroup separable, so we recover the result of You.
In fact, we may extend the result from abelian to nilpotent groups as in Theorem~\ref{thm:prod_with_nilp}.
The proof of this proceeds essentially verbatim to that above; one need only replace the word `abelian' with `polycyclic', the inductive quantity of free abelian rank with that of the Hirsch length, and the role of \(Z\) is taken by the centre of the nilpotent factor \(N\) (which is necessarily infinite).

Note that this proof only uses the a priori weaker hypotheses that \(N\) is polycyclic and every infinite quotient of \(N\) has a finite index torsionfree subgroup with infinite centre.
However, it is straightforward to see that this class coincides with the class of finitely generated virtually nilpotent groups.

\section{Product separability}

Our main tool for showing product separability for extensions will be a combinatorial property, which roughly states that essentially any way one writes a given element as a product of elements from some fixed collection of subgroups has one of the factors lying in a finite set.

Observe that, given subgroups \(H, K\) and \(g \in HK\), there may be many ways to write \(g\) as a product of an element of \(H\) and one of \(K\).
Indeed, if \(g = hk\) is such a product and \(H \cap K\) is infinite, \((hs)(s^{-1}k)\) gives infinitely many different decompositions of \(g\) as \(s\) ranges over \(H \cap K\).
This motivates the following relation on such products.

\begin{definition}[Product representatives]
    Let \(G\) be a group and \(H_1, \dots, H_n \leqslant G\) be subgroups.
    Given \(g \in H_1 \dots H_n\), a \emph{product representative} of \(g\) is a tuple \((h_1, \dots, h_n) \in H_1 \times \dots \times H_n\) such that \(g = h_1 \dots h_n\).

    For \(i = 1, \dots, n-1\), write \(S_i = H_i \cap H_{i+1}\).
    We say that two product representatives \((h_1, \dots, h_n)\) and \((k_1, \dots, k_n)\) are equivalent if there are elements \(s_1 \in S_1, \dots, s_{n-1} \in S_{n-1}\) such that \(s_{i-1} k_i = h_i s_i\) for each \(i = 1, \dots, n\) (where \(s_0 = s_n = 1\)).
\end{definition}

The relation defined above is easily seen to be an equivalence relation.

\begin{definition}[Bottlenecked product representatives]
    Let \(G\) be a group and \(\mathcal{C}\) be a class of subgroups.
    We say that \(G\) has \emph{bottlenecked product representatives} over \(\mathcal{C}\) if for any \(H_1, \dots, H_n \in \mathcal{C}\), any \(g \in H_1 \dots H_n\), there is a set \(R \subseteq H_1 \times \dots \times H_n\) of product representatives of \(g\) such that the projection \(R \to H_i\) is finite for some \(i = 1, \dots, n\), and every product representative of \(g\) is equivalent to some product representative in \(R\).
\end{definition}

\begin{remark}
    We note that the property of having bottlenecked product representatives even over cyclic subgroups appears to be quite restrictive.
    Indeed, suppose that \(G\) contains a non-cyclic free abelian subgroup.
    Let \(H = \langle a \rangle\) and \(K = \langle b \rangle\) be distinct summands of this subgroup.
    Then \(1 = a^n b^n a^{-n} b^{-n}\) form infinitely many product representatives of the identity element in the quadruple coset \(HKHK\), and since \(H \cap K\) is trivial, each lies in its own one-element equivalence class.
    Hence the identity is a witness to the failure of this property for such a group \(G\).
\end{remark}

We are now ready to prove our main technical theorem.

\begin{theorem}
\label{thm:main}
    Let \(G\) be a central extension of a product separable group \(Q\) by a finitely generated group.
    If \(G\) is subgroup separable and \(Q\) has bottlenecked product representatives over finitely generated subgroups, then \(G\) is product separable.
\end{theorem}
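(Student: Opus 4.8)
The plan is to prove the statement by a double induction: the outer induction is on the free (torsion-free) rank $r(Z)$ of the kernel $Z = \ker\pi$, and the inner induction is on the number $n$ of factors. When $r(Z) = 0$ the kernel is finite, $G$ is residually finite (being subgroup separable), and so $G$ is commensurable with the product separable group $Q$; since $n$-coset separability is inherited by finite-index subgroups and overgroups, the base case follows. For the inner induction, the case $n = 1$ is exactly the subgroup separability of $G$. To show a product $H_1 \cdots H_n$ is separable I will fix a descending cofinal sequence $\{Z_m\}$ of finite-index characteristic subgroups of $Z$ with $\bigcap_m Z_m = \{1\}$; each $Z_m$ is normal in $G$, and Lemma~\ref{lem:prod_with_Z_separable} (using that $Q$ is $n$-coset separable) guarantees that every product $H_1 \cdots H_n Z_m$ is separable. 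It therefore suffices to prove $\bigcap_m H_1 \cdots H_n Z_m \subseteq H_1 \cdots H_n$.

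The engine of the proof is a \emph{defect subgroup}. For $1 \leqslant j \leqslant n-1$ set $D_j = \{\, a b^{-1} : a \in H_j,\ b \in H_{j+1},\ \pi(a) = \pi(b)\,\}$; since $Z$ is central each $D_j$ is a subgroup of $Z$, and I let $D = \langle D_1, \dots, D_{n-1}\rangle \leqslant Z$. A short central rearrangement shows two things: first, $D \subseteq H_1 \cdots H_n$ (distribute the factors of an element of $D$ into consecutive slots), and second, left multiplication by $D$ preserves $H_1 \cdots H_n$. As $D \leqslant Z$ is central it is normal in $G$, which sets up the dichotomy. If $D$ is infinite, then $r(Z/D) < r(Z)$; the quotient $G/D$ is again a central extension of $Q$ by a finitely generated group and is subgroup separable by Corollary~\ref{lem:central_quotient_LERF}, hence product separable by the outer induction hypothesis. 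Since $D \subseteq H_1 \cdots H_n$ is normal with $G/D$ being $n$-coset separable, Lemma~\ref{lem:prod_sep_if_int_of_subgp_nontriv} finishes this case.

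The remaining case is $D$ finite, and here the bottlenecked product representatives of $Q$ enter. Take $g \in \bigcap_m H_1 \cdots H_n Z_m$ and write $g = h_1^{(m)} \cdots h_n^{(m)} z_m$ with $h_i^{(m)} \in H_i$ and $z_m \in Z_m$. The tuples $(\pi(h_1^{(m)}), \dots, \pi(h_n^{(m)}))$ are product representatives of $\pi(g)$ over the finitely generated subgroups $\pi(H_i) \leqslant Q$, so by hypothesis each is equivalent to a representative drawn from a set $R$ whose projection to some fixed coordinate $i_0$ is finite. Passing to a subsequence, I may assume this $i_0$-coordinate is a constant value $f$. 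The key manoeuvre is to lift the sliding equivalence from $Q$ to $G$: choosing lifts $a_j \in H_j$ and $b_j \in H_{j+1}$ of each sliding element $s_j \in \pi(H_j) \cap \pi(H_{j+1})$, one rebuilds a product representative $(k_1^{(m)}, \dots, k_n^{(m)})$ in $G$ whose $i_0$-entry lifts $f$ and whose product differs from $h_1^{(m)} \cdots h_n^{(m)}$ by the central element $\delta_m = \prod_j a_j b_j^{-1}$, which lies in $D$ by construction. Absorbing the $H_{i_0} \cap Z$-ambiguity of the $i_0$-entry (also contained in $D$), this yields $A_m f^* B_m = e_m \, g z_m^{-1}$ with $e_m \in D$, a fixed lift $f^*$ of $f$, and $A_m \in H_1 \cdots H_{i_0-1}$, $B_m \in H_{i_0+1} \cdots H_n$.

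Now both finiteness inputs are used together: since $D$ is finite I pass to a further subsequence on which $e_m = e$ is constant, so that $e g \in H_1 \cdots H_{i_0-1}\, f^* H_{i_0+1} \cdots H_n \cdot Z_m$ for infinitely many $m$. The set $H_1 \cdots H_{i_0-1} f^* H_{i_0+1} \cdots H_n$ is a right translate by $f^*$ of a product of $n-1$ finitely generated subgroups (conjugating the right-hand factors by $f^*$), hence separable by the inner induction hypothesis; because the $Z_m$ are cofinal and descending, it follows that $eg$ lies in this set, and in particular $eg \in H_1 \cdots H_n$. As left multiplication by $D$ stabilises $H_1 \cdots H_n$, I conclude $g = e^{-1}(eg) \in H_1 \cdots H_n$, as required. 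The main obstacle I anticipate is precisely the bookkeeping in this last case: verifying that $D$ is exactly the right receptacle — that the defects arising from lifting the $Q$-side sliding equivalence (where $\pi(H_j) \cap \pi(H_{j+1})$ may be strictly larger than $\pi(H_j \cap H_{j+1})$) land in $D$, that $D$ both sits inside and stabilises the product, and that the finiteness of $D$ and the finiteness of the distinguished coordinate can be invoked simultaneously along a single subsequence.
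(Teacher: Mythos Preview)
Your argument is correct and follows the same overall architecture as the paper: a double induction on the number of factors and on the torsion-free rank of $Z$, with the rank dropping via Lemma~\ref{lem:prod_sep_if_int_of_subgp_nontriv} when a suitable central subgroup of the product is infinite, and the bottleneck hypothesis in $Q$ combined with Lemma~\ref{lem:prod_with_Z_separable} handling the remaining case.

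The one genuine point of divergence is how you treat torsion in $Z$. The paper begins by passing to a finite-index subgroup so that $Z$ is free abelian; then each $D_j = H_jH_{j+1}\cap Z$ is either infinite (rank drops) or trivial, and in the latter case Lemma~\ref{lem:int_of_projections} gives $\pi(H_j)\cap\pi(H_{j+1}) = \pi(H_j\cap H_{j+1})$, so the $Q$-side sliding equivalence lifts \emph{exactly} to $G$ with no defect at all. You instead keep $Z$ arbitrary and introduce the defect subgroup $D = \langle D_1,\dots,D_{n-1}\rangle$; when $D$ is merely finite you must track the central error $\delta_m \in D$ produced by lifting each $s_j$ via a pair $(a_j,b_j)$, absorb the $H_{i_0}\cap Z$ ambiguity of the fixed coordinate into $D$ as well, and then pass to a further subsequence to freeze $e_m$. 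This works, and your observation that $D$ both lies in and stabilises $H_1\cdots H_n$ is exactly what is needed to close the argument. The paper's reduction buys a cleaner lifting step at the cost of one extra sentence at the start; your route avoids that reduction at the cost of the defect bookkeeping.
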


\begin{proof}
    Let \(1 \to Z \to G \to Q \to 1\) be the short exact sequence corresponding to the extension, so \(Z \leqslant Z(G)\), and denote the quotient map by \(\pi \colon G \to Q\).
    We will begin with a basic reduction.
    As \(Z\) is finitely generated abelian, it has a free abelian subgroup \(Z'\) of finite index.
    As \(G\) is subgroup separable, it has a finite index subgroup \(G'\) for which \(G' \cap Z = Z'\).
    Now \(G\) is product separable if and only if \(G'\) is, so we may assume without loss of generality that \(Z\) is itself free abelian.
    
    We proceed by a double induction on the number of subgroups \(n\) and the rank \(r\) of \(Z\) as a free abelian group.
    The base cases \(n = 0\) or \(r = 0\) hold trivially: respectively these are the statements that \(G\) is residually finite, and that \(G\) is isomorphic to \(Q\), which is product separable by assumption.

    Suppose \(n \geq 1, r \geq 1\) and that the statement holds for \(n' < n\) or \(r' < r\). 
    Let \(H_1, \dots, H_n \leqslant G\) be finitely generated subgroups of \(G\).

    Lemma~\ref{lem:double_coset_int_central_is_subgp} tells us that for each \(i = 1, \dots, n-1\), the intersection \(K = H_i H_{i+1} \cap Z\) is a subgroup of \(Z\).
    Suppose that \(K\) is nontrivial.
    As \(Z\) is central, \(K\) is normal in \(G\) and, as \(Z\) is free abelian, \(K\) is infinite.
    Therefore \(Z/K\) is an abelian group of rank strictly less than \(r\) and we have the exact sequence
    \[
        1 \to Z/K \to G/K \to Q \to 1
    \]
    which is a central extension of \(Q\) by \(Z/K\).
    Moreover, \(G/K\) is subgroup separable by Corollary~\ref{lem:central_quotient_LERF}. 
    Thus, by the induction hypothesis, \(G/K\) is product separable.
    It follows from Lemma~\ref{lem:prod_sep_if_int_of_subgp_nontriv} that \(H_1 \dots H_n\) is separable.
    We may thus assume that \(H_i H_{i+1} \cap Z = \{1\}\) for each \(i = 1, \dots, n-1\).
    It follows that \(H_i \cap Z = \{1\}\) for each \(i = 1, \dots, n\), and so each of the projections \(H_i \to \pi(H_i)\) are injective.
    Moreover, by Lemma~\ref{lem:int_of_projections} we have
    \begin{equation}
    \label{eq:int_commutes}
        \pi(H_i) \cap \pi(H_{i+1}) = \pi(H_i \cap H_{i+1})
    \end{equation}
    for each \(i = 1, \dots, n-1\).

    We will approximate \(H_1 \dots H_n\) by products of the form \(H_1 \dots H_n Z_m\), where \(Z_m\) denotes the finite index subgroup \(\{z^m \, | \, z \in Z\} \leqslant_f Z\).
    By Lemma~\ref{lem:prod_with_Z_separable}, all such products are separable.
    Suppose for a contradiction that \(g \notin H_1 \dots H_n\) is in the closure of \(H_1 \dots H_n\) with respect to the profinite topology. 
    It follows that \[g \in \bigcap_{m \geq 1} H_1 \dots H_n Z_m.\]
    By definition, we have for each \(m \geq 1\)
    \begin{equation*}
        g = h_1^{(m)} \dots h_n^{(m)} z^{{(m)}}
    \end{equation*}
    where \(h_i^{(m)} \in H_i\) and \(z^{(m)} \in Z_m\).

    For each \(i\) and \(m\), let us write \(x_i^{(m)} = \pi(h_i^{(m)})\), so that \((x_1^{(m)}, \dots, x_n^{(m)})\) is a product representative for \(\pi(g)\) for each \(m \geq 1\).
    Now \(Q\) has bottlenecked products representatives over finitely generated subgroups; let \(R\) be the set of equivalence class representatives of product representatives of \(G\) given by the definition, with the projection \(R \to \pi(H_j)\) finite for some \(j = 1, \dots, n\).
    That is, there is some product representative \((y_1^{(m)}, \dots, y_n^{(m)})\) of \(\pi(g)\) equivalent to \((x_1^{(m)}, \dots, x_n^{(m)})\) such that 
    \begin{equation}
    \label{eq:prod_reps_finite}
        \abs{\Big\{y_j^{(m)} \, | \, m \geq 1\Big\}} < \infty.
    \end{equation}
    
    By definition, there are elements \(t^{(m)}_i \in \pi(H_i) \cap \pi(H_{i+1})\) with \(t^{(m)}_{i-1} y^{(m)}_i = x^{(m)}_i t^{(m)}_i\) for \(i = 1, \dots, n\) (writing \(t^{(m)}_0 = t^{(m)}_n = 1\)).
    As \(y_i^{(m)} \in \pi(H_i)\), there is \(k_i^{(m)} \in H_i\) with \(\pi(k_i^{(m)}) = y_i^{(m)}\).
    Moreover, by (\ref{eq:int_commutes}), \(t_i^{(m)} \in \pi(H_i \cap H_{i+1})\), so there is some \(s_i^{(m)} \in H_i \cap H_{i+1}\) with \(\pi(s_i^{(m)}) = t_i^{(m)}\) (again taking \(s_0^{(m)} = s_n^{(m)} = 1\)).
    Now for \(i = 1, \dots, n\), we have
    \begin{align*}
        \pi(s_{i-1}^{(m)} k_i^{(m)}) &= t^{(m)}_{i-1} y_i^{(m)} \\
            &= x^{(m)}_i t^{(m)}_i \\
            &= \pi(h_i^{(m)} s_i^{(m)}).
    \end{align*}
    However, we have observed that \(\pi\) is injective restricted to \(H_i\), so in fact \(s_{i-1}^{(m)} k_i^{(m)} = h_i^{(m)} s_i^{(m)}\) for each \(i\).
    This implies that \((h_1^{(m)}, \dots, h_n^{(m)})\) and \((k_1^{(m)}, \dots, k_n^{(m)})\) are equivalent product representatives in \(H_1 \dots H_n\).
    That is, we have
    \[
        g = k_1^{(m)} \dots k_n^{(m)} z^{(m)}
    \]
    for each \(m \geq 1\).

    Now by (\ref{eq:prod_reps_finite}), we may pass to a subsequence for which \(y_j = y^{(m)}_j\) is constant.
    The fact that the projection \(H_j \to \pi(H_j)\) is injective implies that the elements \(k_j = k_j^{(m)}\) are constant also.
    Thus \(g \in H_1 \dots H_{j-1} k_j H_{j+1} \dots H_n Z_m\) for each \(m \geq 1\).
    By the induction hypothesis the set \(H_1 \dots H_{j-1} k_j H_{j+1} \dots H_n\) is separable, so there is some \(N \lhd_f G\) with \(g \notin K = H_1 \dots H_{j-1} k_j H_{j+1} \dots H_n N\).
    Since \(N \cap Z \leqslant_f Z\), the subgroup \(N\) contains \(Z_m\) for large enough \(m \geq 1\).
    This implies \(K\) contains \(H_1 \dots H_{j-1} k_j H_{j+1} \dots H_n Z_m\) and thus \(g\), a contradiction.
\end{proof}

\begin{proposition}
\label{prop:hyp_bounded_prods}
    Let \(Q\) be a hyperbolic group.
    Then \(Q\) has bottlenecked product representatives over the class of quasiconvex subgroups.
\end{proposition}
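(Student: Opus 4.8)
The goal is to show that a hyperbolic group $Q$ has bottlenecked product representatives over the class of quasiconvex subgroups. Given quasiconvex subgroups $H_1, \dots, H_n$ and $g \in H_1 \cdots H_n$, I need to find a set $R$ of product representatives of $g$ such that the projection to some $H_j$ is finite, and every product representative is equivalent to one in $R$.

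Let me think about what the equivalence relation does. Two product representatives $(h_1, \dots, h_n)$ and $(k_1, \dots, k_n)$ are equivalent if there exist $s_i \in S_i = H_i \cap H_{i+1}$ with $s_{i-1} k_i = h_i s_i$. This means I can absorb intersection elements between adjacent factors. So within an equivalence class I have freedom to modify each $h_i$ by elements of $H_{i-1} \cap H_i$ on the left and $H_i \cap H_{i+1}$ on the right.

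The key geometric idea should be: given any product representative $(h_1, \dots, h_n)$, I want to choose a canonical representative in its equivalence class by minimizing lengths. The natural thing is to pick, within each coset, the shortest element. Lemma~\ref{lem:bdd_gromov_product} tells me that when I pick $h_i$ to be minimal in $h_i(H_i \cap H_{i+1})$, the Gromov product $\langle 1, h_i k\rangle_{h_i}$ is bounded by $c_0$ for any $k \in H_{i+1}$. This is exactly the condition I need to make the broken line a quasigeodesic via Lemma~\ref{lem:broken_line_qgd}.

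**The strategy.**

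Think of $g$ as living in the Cayley graph of $Q$. I want to write $g = h_1 \cdots h_n$ where the partial products $g_0 = 1, g_1 = h_1, g_2 = h_1 h_2, \dots, g_n = g$ trace out a broken line: the $i$-th segment is a geodesic from $g_{i-1}$ to $g_i$, which is a translate of a geodesic representing $h_i$. I claim that within each equivalence class I can choose a representative such that this broken line is a quasigeodesic with uniform constants.

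Let me make this precise. Starting from an arbitrary product representative $(h_1, \dots, h_n)$, I modify it step by step. The equivalence allows me to replace $h_i$ by $s_{i-1}^{-1} h_i s_i$ essentially—absorbing intersection elements. I want to arrange that at each vertex $g_i$, the incoming and outgoing directions make a bounded Gromov angle. Concretely, I should choose the representative so that each $h_i$ is shortest in its double coset modulo the relevant intersections. The subtlety is that modifying $h_i$ to be minimal in $h_i(H_i \cap H_{i+1})$ might clash with making $h_{i+1}$ minimal, so I need to process this carefully—perhaps greedily from left to right, or by a global minimization of total length.

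Here is the cleaner approach. Among all product representatives equivalent to a given one, choose one minimizing total length $\sum_i |h_i|$ (or minimizing lexicographically). For such a minimal representative, I claim each $h_i$ is minimal in $h_i(H_i \cap H_{i+1})$ and each $h_{i+1}$ is minimal in $(H_i \cap H_{i+1}) h_{i+1}$. Then Lemma~\ref{lem:bdd_gromov_product}, applied with $H = H_i \cap H_{i+1}$ appropriately, bounds the Gromov product at each vertex by some uniform $c_0$ depending only on $\delta$ and the quasiconvexity constants.

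**The bottleneck.**

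Now here is where I get finiteness. Lemma~\ref{lem:broken_line_qgd} says: if consecutive vertices are far apart (distance $\geq c_1$) and the Gromov products at interior vertices are $\leq c_0$, the broken line is a $(4, c_2)$-quasigeodesic. Two cases arise. If some segment is short ($|h_i| < c_1$), then since $h_i \in H_i$ and there are only finitely many elements of $Q$ of length below $c_1$, the factor $h_i$ lies in a finite set—giving a candidate for the bottleneck index $j = i$. If all segments are long, then the broken line is a uniform quasigeodesic from $1$ to $g$, and by the Morse lemma (stability of quasigeodesics in hyperbolic spaces) it stays within bounded Hausdorff distance of a fixed geodesic $[1, g]$. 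This forces the vertices $g_i$, and hence the total combinatorial data, to be constrained. The geodesic $[1,g]$ has length $|g|$, so a uniform quasigeodesic from $1$ to $g$ has boundedly many vertices and bounded total length, meaning there are only finitely many such broken lines—so every factor lies in a finite set and any index works as the bottleneck.

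The plan, then, is: collect the set $R$ of all length-minimal representatives across equivalence classes; show each is either a quasigeodesic broken line (finitely many possibilities) or has a short factor (finite set for that factor); and conclude one index $j$ works uniformly. The main obstacle will be step two: verifying that global length-minimization actually yields the per-factor minimality needed to apply Lemma~\ref{lem:bdd_gromov_product} at every vertex simultaneously, and handling the interaction between the "short segment" and "long segment" regimes to pin down a single bottleneck index $j$ that works for all of $g$'s representatives at once. Quantifying the finiteness in the all-long case via the Morse lemma, with constants depending only on $\delta$, $\sigma$, and $|g|$, is the crux.
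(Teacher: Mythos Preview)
Your overall strategy matches the paper's: normalize each product representative within its equivalence class so that the Gromov products at the nodes of the associated broken line are bounded by Lemma~\ref{lem:bdd_gromov_product}, then apply Lemma~\ref{lem:broken_line_qgd} to conclude that either some segment is short or the whole path is a uniform quasigeodesic, in which case every segment has length at most \(4|g|+c_2\).

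However, your chosen normalization has a gap. Minimizing the total length \(\sum_i |h_i|\) over the equivalence class does \emph{not} imply that each \(h_i\) is minimal in \(h_i(H_i\cap H_{i+1})\): shifting an element \(s\in H_i\cap H_{i+1}\) to the right could shorten \(h_i\) while lengthening \(h_{i+1}\) by more, so the global minimizer need not be minimal coordinatewise, and Lemma~\ref{lem:bdd_gromov_product} does not apply. The fix is exactly the greedy left-to-right procedure you mention in passing and then abandon: take \(k_1\in h_1(H_1\cap H_2)\) of minimal length, write \(k_1=h_1s_1\), then take \(k_2\in s_1^{-1}h_2(H_2\cap H_3)\) of minimal length, and so on. This guarantees each \(k_i\) is minimal in \(k_i(H_i\cap H_{i+1})\), which is precisely the hypothesis of Lemma~\ref{lem:bdd_gromov_product}. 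This is what the paper does.

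Two of your concluding worries are also unnecessary. First, you do not need the Morse lemma in the all-long case: once the broken line is \((4,c_2)\)-quasigeodesic, its total length is at most \(4|g|+c_2\), so every \(|k_i|\) is bounded by this constant directly. Second, because every \(|k_i|\) is bounded by \(\max(c_1,\,4|g|+c_2)\), the set \(R\) of normalized representatives is itself finite, and any index \(j\) serves as the bottleneck; there is no tension between the short-segment and long-segment regimes.
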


\begin{proof}
    Let \(\delta\) a hyperbolicity constant for a Cayley graph \(\Gamma\) of \(Q\) with respect to a finite generating set.
    Let \(H_1, \dots, H_n \leqslant Q\) be quasiconvex subgroups with quasiconvexity constant \(\sigma \geq 0\), and let \(g \in H_1 \dots H_n\).
    Take \(c_0 = c_0(\delta,\sigma)\) to be the constant of Lemma~\ref{lem:bdd_gromov_product}, and \(c_1 = c_1(c_0), c_2 = c_2(c_0)\) be the corresponding constants of Lemma~\ref{lem:broken_line_qgd}.
    Let \((h_1, \dots, h_n)\) be a product representative of \(g\).
    We will show that \((h_1, \dots, h_n)\) is equivalent to some \((k_1, \dots, k_n)\) with \(\abs{k_i}\) bounded for some \(i = 1, \dots, n\). 
    As the word metric is proper, this shows that \(Q\) has bottlenecked product representatives.

    There is \(k_1 \in h_1 (H_1 \cap H_2)\) such that \(\abs{k_1}\) is minimal among elements in this coset.
    By definition, \(k_1 = h_1 s_1\) for some \(s_1 \in H_1 \cap H_2\).
    We continue by induction: for \(i = 2, \dots, n-1\) take \(k_i \in s_{i-1} h_i (H_i \cap H_{i+1})\) to be such that \(\abs{k_i}\) is minimal in this coset.
    As before, there is \(s_i \in H_i \cap H_{i+1}\) such that \(s_{i-1} h_i = k_i s_i\).
    Finally let \(k_n = s_{n-1} h_n\).
    From the construction it is immediate that \((h_1, \dots, h_n)\) and \((k_1, \dots, k_n)\) are equivalent product representatives of \(g\).

    Now define the path \(p\) in \(\Gamma\) to be a broken line with nodes \(1, k_1, k_1 k_2, \dots, k_1 \dots k_n = g\).
    That is, \(p\) is a broken line whose initial vertex is the identity and whose segments are labelled by \(k_1, \dots, k_n\).
    By Lemma~\ref{lem:bdd_gromov_product}, we have that \(\langle 1, k_i k_{i+1} \rangle_{k_i} \leq c_0\) for each \(i = 1, \dots, n-1\).
    If there is some \(i = 1, \dots, n\) such that \(\abs{k_i} \leq c_1\) then we are done, so suppose otherwise.
    In this case, we may apply Lemma~\ref{lem:broken_line_qgd} to see that \(p\) is \((4,c_2)\)-quasigeodesic in \(\Gamma\).
    This gives, for each \(i = 1, \dots, n\),
    \[
        \abs{k_i} \leq \ell(p) \leq 4 \abs{p} + c_2 = 4 \abs{g} + c_2, 
    \]
    completing the proof.
\end{proof}

We can now combine the above to prove our main theorem.

\begin{proof}[Proof of Theorem~\ref{thm:main_intro}]
    Let \(G\) be a finitely generated central extension of subgroup separable locally quasiconvex hyperbolic group \(Q\) by a group \(Z\).
    It is well known that hyperbolic groups are finitely presented, so Remark~\ref{rem:central_ext_fp_quotient} tells us that \(Z\) is finitely generated.
    Proposition~\ref{prop:hyp_bounded_prods} tells us that \(Q\) has bottlenecked product representatives over the class of quasiconvex subgroups.
    Moreover, by the main result of \cite{MinGFERF}, the product of finitely many quasiconvex subgroups of \(Q\) is separable in \(Q\).
    As \(Q\) is locally quasiconvex, the above implies it has bottlenecked product representatives over all finitely generated subgroups and that it is product separable.
    Assuming \(G\) is subgroup separable, the result now follows immediately by an application of Theorem~\ref{thm:main}.
\end{proof}

Marginally more work gives us the corollary from the introduction.

\begin{proof}[Proof of Corollary~\ref{cor:ext_limitgp}]
    Let \(Q\) be a hyperbolic limit group and \(G\) a central extension of \(Q\) by a finitely generated group.
    All limit groups are subgroup separable by the work of Wilton \cite{WiltonLimitGps}.
    Moreover, limit groups are highly residually finite \cite{Grunewald-Jaikin-Zalesskii}, and so by Proposition~\ref{lem:ERF_by_LERF_good}, the extension \(G\) is subgroup separable.
    Finally, limit groups are locally quasiconvex \cite{Dahmani_Conv}.
    Hence, Theorem~\ref{thm:main_intro} applies to hyperbolic limit groups.

    Let \(M\) be a Seifert-fibred 3-manifold with hyperbolic orbifold base \(\Sigma\).
    The orbifold \(\Sigma\) has a finite-sheeted cover by a hyperbolic surface, which induces a finite-sheeted covering of \(M\).
    As the fundamental group of \(M\) is product separable if and only if the fundamental group of this covering is, we may suppose \(\Sigma\) is a hyperbolic surface.
    If the fibre group \(\pi_1 S^1\) has finite image in \(\pi_1 M\), then \(\pi_1 M\) and \(\pi_1 \Sigma\) are commensurable and we are done, so suppose otherwise.
    Then \(M\) has a two-sheeted cover (orientable) \(M'\) for which the image of \(\pi_1 S^1\) is central in \(\pi_1 M'\).
    Now \(\pi_1 M'\) is a central extension of \(\pi_1 \Sigma'\), for a two-sheeted orientable cover \(\Sigma'\) of \(\Sigma\).
    The result now follows as orientable surface groups are limit groups.
\end{proof}





\emergencystretch=1em

\printbibliography

\end{document}